\theoremstyle{plain}
\newtheorem{theorem}{Theorem}
\newtheorem{corollary}[theorem]{Corollary}
\newtheorem{proposition}[theorem]{Proposition}
\theoremstyle{definition}
\newtheorem{definition}[theorem]{Definition}
\theoremstyle{remark}
\newtheorem{remark}[theorem]{Remark}
\def\d#1{{#1\kern-0.4em\char"16\kern-0.1em}}
\def\D#1{{\raise0.2ex\hbox{-}\kern-0.4em #1}}
\newcounter{zd}
\newcounter{zdr}[subsection]
\def\Div{{\rm div}}
\newcommand{\eps}{\varepsilon}
\def\pa{\partial}
\def\cal{\mathcal}
\def\R{{\bf R}}
\def\N{{\bf N}}
\def\eps{\varepsilon}
\def\mx{{\bf x}}
\def\my{{\bf y}}
\begin{document}
\title[Transport-collapse scheme]{Transport-collapse scheme for scalar conservation laws -- initial and boundary value problems}

\author{ Darko Mitrovi\'c}
\address{ Darko Mitrovi\'c  \newline
University of Zagreb, Faculty of Mathematics and Natural Sciences, Zagreb, Croatia} \thanks{Permanent address of Darko Mitrovic is Faculty of Mathematics and Natural Sciences, University of Montenegro, Podgorica, Montenegro}
\email{  darko.mitrovic@math.hr}

\author{ Andrej Novak}
\address{ Andrej Novak  \newline
University of Zagreb, Faculty of Electrical Engineering, Zagreb, Croatia}
\email{andrej.novak@fer.hr}

 \date{}

\begin{abstract}
We extend Brenier's transport collapse scheme on the Cauchy problem for heterogeneous
scalar conservation laws and initial-boundary value problem for homogeneous scalar conservation laws. It is based
on averaging out the solution to the corresponding kinetic equation,
and it necessarily converges toward the entropy admissible solution. In the case of initial-boundary value problems, such a procedure is used to construct a numerical scheme which leads to a new solution concept for the initial-boundary value problems for scalar conservation laws. We also provide numerical examples.
\end{abstract}

\subjclass[2010]{35L65, 65M25}

\keywords{heterogeneous scalar conservation law; mixed problem; transport-collapse scheme; kinetic formulation}

\maketitle

\section{Introduction}

The subject of the paper is the construction of new numerical method for Cauchy and initial-boundary problems for scalar conservation laws. The method is a generalization of the transport-collapse scheme introduced in \cite{brenier}. A consequence of the analysis of the scheme is a new solution concept of the initial-boundary value problem for scalar conservation laws, and this is the most important contribution of the paper.

In order to introduce it, let $\Omega\subset \R^d$ be a bounded smooth domain and $\R^+=[0,\infty)$.  We consider 
\begin{align}
\label{cl1-hom}
&\pa_t u+\Div_\mx f(u)=0,  \ \ (t,\mx)\in \R^+\times \Omega,\\
\label{ic1} &u|_{t=0}=u_0(\mx),\\
\label{bc1} &u|_{\R^+\times \pa \Omega}=u_B(t,\mx).
\end{align} where $f\in C^2(\R;\R^d)$. If not stated otherwise, we assume that $u_0 \in L^1(\R^d) \cap BV(\R^d)$, $u_B\in L^1_{loc}(\R^+\times \pa \Omega) \cap BV(\R^+\times \pa \Omega)$. We also assume that 
\begin{equation}
\label{bndass}
a \leq u_0, u_B \leq b \text{   for some constants $a \leq b$.}
\end{equation}


A typical problem described by \eqref{cl1-hom}, \eqref{ic1}, \eqref{bc1}
arises e.g. in traffic flow models. Namely, if we aim to describe a
flow on a finite highway (required to model on and off ramps) we
need to use boundary conditions \cite{SB}. For instance,
optimization of travel time and cost between two points can be
obtained by controlling incoming and outgoing car densities
\cite{Anc}.

Nevertheless, it is clear that the boundary conditions cannot be
prescribed if the characteristics corresponding to equation
\eqref{cl1}  and emerging from the boundary leave $\Omega$. 
This means that one needs to
introduce a new concept defining what conditions the
unknown function $u$ should satisfy in order to be a solution to \eqref{cl1-hom},
\eqref{ic1}, \eqref{bc1}. This was first done in \cite{BRN} where the existence of strong  traces at the boundary of solutions is assumed; see \cite{AM_jhde, pan, Vass}. The weak formulation which does not require existence of strong traces was later proposed by F.Otto \cite{Otto} and the corresponding numerical method was developed in \cite{Vov}. The concept is further extended in \cite{pan_tams} in a more general setting (on manifolds necessarily implying that the flux depends on $\mx$). Let us recall it here.

\begin{definition}
\label{def-panov}A function $u\in L^\infty(\Omega)$ is said to be the weak entropy solution to \eqref{cl1-hom}, \eqref{ic1}, \eqref{bc1} if there exists a constant $L\in \R$ such that for every $k\in \R$ and every non-negative $\varphi\in C_c(\R^d_+;\R^+)$, $\R^d_+=\R^+\times \R^d$, it holds

\begin{align}
\label{semi+}
&\int_{\R^d_+}\left(|u-k|_+ \pa_t \varphi+{\rm sgn}_+(u-k)(f(u)-f(k)) \, \nabla_{\mx} \varphi \right)d\mx dt\\&\qquad\qquad + \int_{\R^d}|u_0-k|_+\varphi(0,\cdot) d\mx + L \int_{\R^+\times \pa \Omega} \varphi \, |u_B-k|_+ d\gamma(\mx) dt \geq 0, \nonumber
\end{align}and

\begin{align}
\label{semi-}
&\int_{\R^d_+}\left(|u-k|_- \pa_t \varphi+{\rm sgn}_-(u-k)(f(u)-f(k)) \, \nabla_{\mx} \varphi \right)d\mx dt\\&\qquad\qquad + \int_{\R^d}|u_0-l|_-\varphi(0,\cdot) d\mx+  L \int_{\R^+\times  \pa \Omega} \varphi\, |u_B-k|_-  d\gamma(\mx) dt \geq 0,& \nonumber
\end{align} where $\gamma$ is the measure on $\pa \Omega$.
\end{definition}
As it comes to the refinement of the latter concept, we actually base it on an interesting observation from \cite{pan_tams} roughly stating that if the characteristics emerging from $\{t=0\}\times \Omega$ hit the boundary then the corresponding boundary value should not affect the solution. Thus, we are going to construct the definition of solution so that it involves somehow only those parts of the boundary which essentially influence on solutions. 

To be more precise, assume that we are dealing with the flux depending on $\mx$  i.e. $f=f(\mx,\lambda)$. Denote by 
$$
S^-=\{\mx \in \pa \Omega: \; \langle f'_\lambda(\mx,\lambda), \vec{\nu} \rangle \leq 0 \ \ \text{a.e.} \ \ \lambda \in I \},
$$ where $I$ contains all essential values of the functions $u_B$ and $u_0$ (i.e. of appropriate entropy solution $u$), and $\vec{\nu}$ is the outer unit normal on $\pa \Omega$. The set $S^-$ actually consists of all points such that all possible characteristics from that point enter into the (interior of the) set $\Omega$. Therefore, for every $\mx \in S^-$, the trace of the corresponding entropy solution is actually $u_B(\mx)$. 

Similarly, for 

$$
S^+=\{\mx \in \pa \Omega: \; \langle f'_\lambda(\mx,\lambda), \vec{\nu} \rangle \geq 0 \ \ \text{a.e.} \ \ \lambda \in I \},
$$ all possible characteristics issuing from $\mx\in S^+$ leave the set $\Omega$, and $u_B(\mx)$ does not influence on the weak entropy solution $u$ to \eqref{cl1-hom}, \eqref{ic1}, \eqref{bc1}.

However, both sets $S^-$ and $S^+$ can be empty since for some $\lambda \in I$, it can be $ \langle f'_\lambda(\mx,\lambda), \vec{\nu} \rangle \leq 0$ and for other $\lambda\in I$ we could have $\langle f'_\lambda(\mx,\lambda), \vec{\nu} \rangle > 0$. Therefore, in order to refine former arguments, we need to rewrite considered conservation laws so that we can more accurately take into account behaviour of the flux $f$ with respect to $\lambda$. A natural choice is the kinetic formulation to \eqref{cl1-hom} since it includes the variable $\lambda$ in a desired way. Before we introduce it, let us recall the Kruzhkov entropy admissibility conditions for (general, heterogeneous) scalar conservation laws. 

\begin{definition}
\label{def1} A bounded function $u$ is called an entropy admissible
solution to
\begin{equation}
\label{cl1}
\pa_t u+\Div_\mx f(t,\mx,u)=0
\end{equation} with the initial conditions \eqref{ic1} if for every convex function
$V\in C^2(\R)$, every $\lambda \in \R$ and every non-negative $\varphi \in
C^1_c(\R^d_+)$, it holds
\begin{align}
\label{a1} &\iint\limits_{\R^+\times \R^d}\! \big[ V(u)\pa_t
\varphi\!+\!\int_a^u f'_\lambda(t,\mx,v)\,V'(v)dv \cdot \nabla
\varphi\!+\!\int_a^u \Div_{\mx} f(t,\mx,v) V''(v)dv \,\varphi\big]
d\mx dt
\\& +\int_{\R^d} V(u_0(\mx))\varphi(0,\mx) d\mx\leq 0;
\nonumber
\end{align}
\end{definition} 

\begin{remark}
In the case of the heterogeneous equation \eqref{cl1}, we shall consider the Cauchy problem. As we shall see, the heterogeneity causes significant technical challenges. It is possible to overcome them in the case of the boundary problem as well, but we believe that this would be unnecessary complication which would hide main ideas of the new initial-boundary concept. In the numerical examples at the end of the paper, we shall take a space dependent flux in order to show how the method works in general situations.
\end{remark} 

Equivalent and more usual definition of admissible solution is
given by the Kruzhkov entropies $V(\lambda)=|u-\lambda|$, $\lambda \in
\R$, and it states that a bounded function $u$ is called an entropy
admissible solution to \eqref{cl1}, \eqref{ic1} if for
every $\lambda \in \R$ it holds
\begin{align}
\label{a1-stand} \pa_t |u-\lambda|+\Div_\mx [{\rm
sgn}(u-\lambda)(f(t,\mx,u)-f(t,\mx,\lambda))]+ {\rm
sgn}(u-\lambda)\Div_{\mx}f(t,\mx,\lambda) \leq 0
\end{align}in the sense of distributions on ${\cal D}'(\R^d_+)$, and
 it holds $esslim_{t\to 0}\int_\Omega |u(t,\mx)-u_0(\mx)|
d\mx=0$. Roughly speaking, by finding derivative with respect to $\lambda$ in \eqref{a1-stand} one reaches to the kinetic formulation provided below (see e.g. \cite{IV, LM2, LPT} for different variants).

\begin{theorem}
\label{db}\cite{dalibard}
The function $u\in C([0,\infty);L^1(\R^d))\cap
L^\infty_{loc}((0,\infty);L^\infty(\R^d))$ is the entropy admissible
solution to \eqref{cl1}, \eqref{ic1} if and only if there exists a
non-negative Radon measure $m(t,\mx,\lambda)$ such that
$m((0,T)\times \R^{d+1})<\infty$ for all $T>0$ and such that the
function $ \chi(\lambda,u)=\begin{cases}
1, & 0\leq \lambda \leq u\\
-1, & u \leq \lambda \leq 0\\
0, & else
\end{cases},
$ represents the distributional solution to
\begin{align}
\label{kin1} &\pa_t \chi +\Div_{(\mx,\lambda)}[ F(t,\mx,\lambda)
\chi]=\pa_\lambda m(t,\mx,\lambda), \ \ (t,\mx)\in \R^+\times \R^d,
\\
\label{ickin} &\chi(\lambda,u(t=0,\mx))=\chi(\lambda,u_0(\mx)),
\end{align} where $F=(f'_\lambda,-\sum\limits_{j=1}^d \pa_{x_j}f_j)$.
\end{theorem}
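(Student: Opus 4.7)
The plan is to exploit the identity $V(u) - V(0) = \int_\R V'(\lambda) \chi(\lambda, u) \, d\lambda$ bridging convex entropies and the kinetic function, and to test the kinetic equation \eqref{kin1} against $V'(\lambda) \varphi(t, \mx)$ for suitably chosen $V$ and $\varphi$.

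For the ($\Leftarrow$) direction, assume $\chi(\lambda, u)$ satisfies \eqref{kin1} with a non-negative Radon measure $m$. Take $V \in C^2(\R)$ convex, normalized so that $V(0) = V'(0) = 0$, and $\varphi \in C^1_c(\R^d_+)$ non-negative. Using $V'(\lambda)\varphi$ as test function in \eqref{kin1} and integrating by parts in $t$, $\mx$, and $\lambda$, the time derivative together with the initial condition \eqref{ickin} produces $\iint V(u)\pa_t \varphi + \int V(u_0)\varphi(0)$. The transport term $\Div_{(\mx,\lambda)}(F\chi)$ expands via the key identity $\int V'(\lambda) f'_\lambda(t,\mx,\lambda)\chi(\lambda, u)\, d\lambda = \int_0^u V'(v) f'_\lambda(t,\mx,v)\, dv$, and the $\lambda$-component of $F$ contributes, after integration by parts in $\lambda$, the inhomogeneous term $\int_0^u V''(v) \Div_\mx f(t,\mx, v)\, dv$. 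The right-hand side, paired with $V'(\lambda)\varphi$, yields $-\iiint V''(\lambda)\varphi\, dm$, which has definite sign because $V'' \geq 0$, $\varphi \geq 0$, $m \geq 0$. Assembling these pieces delivers \eqref{a1}.

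For the ($\Rightarrow$) direction, I construct $m$ from the Kruzhkov entropy defect measures. For each $k \in \R$, the entropy inequality \eqref{a1-stand} exhibits
\begin{equation*}
\mu_k := -\pa_t |u-k| - \Div_\mx\!\bigl[\sgn(u-k)(f(t,\mx,u) - f(t,\mx,k))\bigr] - \sgn(u-k)\Div_\mx f(t,\mx,k)
\end{equation*}
as a non-negative Radon measure on $\R^+ \times \R^d$. I then define, for $\psi \in C_c(\R^+ \times \R^{d+1})$,
\begin{equation*}
\langle m, \psi \rangle := \tfrac{1}{2} \int_\R \langle \mu_k, \psi(\cdot, \cdot, k) \rangle\, dk,
\end{equation*}
the factor $1/2$ reflecting $|\,\cdot\, -k|'' = 2\delta_k$. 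Using $\pa_\lambda \chi(\lambda, u) = \delta(\lambda) - \delta(\lambda-u)$ together with the product-rule expansion of $\Div_{(\mx,\lambda)}(F\chi)$ (the two occurrences of $\Div_\mx f'_\lambda \cdot \chi$ cancel), a direct computation verifies that $\chi(\lambda, u)$ solves \eqref{kin1} with this $m$; the initial condition \eqref{ickin} follows from \eqref{ic1}.

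The hard step is the ($\Rightarrow$) construction: one must pass from the pointwise-in-$k$ family $\{\mu_k\}_{k \in \R}$ to a single non-negative Radon measure on $\R^+ \times \R^{d+1}$. This requires establishing weak measurability of $k \mapsto \mu_k$ (via smooth convex approximations of Kruzhkov entropies and a limiting argument justified by dominated convergence) and the local finiteness $m((0,T) \times \R^{d+1}) < \infty$. The finiteness follows by restricting $k$ to the compact interval $[a,b]$ guaranteed by \eqref{bndass} and the standard dissipation estimate $\mu_k((0,T) \times \R^d) \lesssim T\|f'_\lambda\|_\infty \|u_0\|_{BV} + \|\Div_\mx f\|_{L^1}\|u_0\|_\infty$, uniform in $k$.
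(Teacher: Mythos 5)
The paper does not actually prove Theorem~\ref{db}: it is quoted from \cite{dalibard} without proof, so there is no in-paper argument to compare against. Your proposal is a correct reconstruction of the standard Lions--Perthame--Tadmor construction as adapted by Dalibard to $(t,\mx)$-dependent fluxes: proving ($\Leftarrow$) by testing with $V'(\lambda)\varphi(t,\mx)$ and using $\int V'(\lambda)\chi(\lambda,u)\,d\lambda=V(u)-V(0)$ together with $\Div_{(\mx,\lambda)}F=0$, and proving ($\Rightarrow$) by disintegrating the Kruzhkov defect measures via $m(\cdot,k)=\tfrac12\mu_k$, is exactly how the equivalence is established. Two technical points deserve more care than your sketch gives them. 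First, the finiteness $m((0,T)\times\R^{d+1})<\infty$: with $u_0\in L^1(\R^d)$ only, $\int_{\R^d}|u_0-k|\,d\mx=+\infty$ for $k\neq0$, so the uniform-in-$k$ dissipation bound must be run with the normalized entropies $|u-k|-|k|$ (or with a spatial cutoff removed at the end); the bound you state involving $\|u_0\|_{BV}$ is not the natural one, and $BV$ regularity is not needed for this step. Second, in the ($\Rightarrow$) direction the kinetic equation is an \emph{equality}, whereas the entropy inequalities only control convex $V$; to obtain the equation against arbitrary profiles $\rho(\lambda)$ you should use linearity of $V\mapsto\mu_V$ together with the representation $V=\tfrac12\int V''(k)\,|\cdot-k|\,dk+(\text{affine})$ on $[a,b]$, the affine part contributing zero because $u$ is a weak solution, rather than relying on the purely formal identity $\pa_\lambda\chi(\lambda,u)=\delta(\lambda)-\delta(\lambda-u)$. (Also note that \eqref{a1} as printed carries the sign $\leq 0$ while the computation you describe, consistent with \eqref{a1-stand}, naturally produces the reverse inequality; check the orientation before declaring that the pieces ``deliver \eqref{a1}''.) With these repairs the argument is complete and matches the cited reference.
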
 In the next section, we shall provide properties of the function $\chi$.

Remark that through the kinetic concept, one reduces the nonlinear equation \eqref{cl1} on the linear (so called kinetic) equation (see Theorem \ref{db}). However,
derivative of a measure figures in the equation (see the right-hand
side of \eqref{kin1}) and it has one more variable which is usually called
kinetic or velocity variable. Due to the former reason, the kinetic
equation is not convenient for numerical implementation. Nevertheless, if we neglect the derivative of the measure, and then average
out the solution to the obtained linear equation with respect to the
kinetic variable, we obtain entropy solution to the considered
problem. Such a procedure is proposed in \cite{brenier} for Cauchy problems corresponding to equation \eqref{cl1-hom}. One of the aims of the paper is  to extend the transport-collapse
scheme \cite{brenier} for the initial value
problem for heterogeneous scalar conservation laws. 

The power of the method to be presented is in its
ability to transform nonlinear problem into linear. Linear scalar
conservation laws are easy to solve numerically since there are a
lot of robust numerical schemes available. The cost of that
"transformation" in practical computing is adding one more dimension
(see \eqref{kin1}).

Moreover, we shall use the transport-collapse techniques to construct the bounded function $u$ satisfying the following definition.

\begin{definition}
\label{def2}
We say that the function $u\in L^\infty(\R^+\times \Omega;[a,b])$ is a weak entropy admissible solution to \eqref{cl1-hom}, \eqref{ic1}, \eqref{bc1} if for every $k\in \R$ and every non-negative $\varphi\in C^1_c(\R^d_+;\R^+)$ it holds

\begin{align}
\label{semi+b}
&\int_{\R^d_+}\left(|u-k|_+ \pa_t \varphi+{\rm sgn}_+(u-k)(f(u)-f(k)) \, \nabla_{\mx} \varphi \right)d\mx dt+ \int_{\R^d}|u_0-k|_+\varphi(0,\cdot) d\mx \\& - \int_{0}^{b-a} \!\!\!\!\int\limits_{\substack{ \R^+\times \pa \Omega \\ \langle f'(\lambda-a),\vec{\nu}(\mx) \rangle < 0}} \!\!\!\!\varphi\,|\lambda-k-a|_+ \langle f'(\lambda-a),\vec{\nu}(\mx) \rangle \chi(\lambda, u_B(t,\mx)-a) d\gamma(\mx) dt d\lambda \geq 0, \nonumber
\end{align}and

\begin{align}
\label{semi-b}
&\int_{\R^d_+}\left(|u-k|_- \pa_t \varphi+{\rm sgn}_-(u-k)(f(u)-f(k)) \, \nabla_{\mx} \varphi \right)d\mx dt + \int_{\R^d}|u_0-k|_-\varphi(0,\cdot) d\mx \\& - \int_{a-b}^0\!\!\!\! \int\limits_{\substack{ \R^+\times \pa \Omega \\ \langle f'(\lambda+b),\vec{\nu}(\mx) \rangle < 0}}\!\!\!\! \varphi\,|\lambda-k+b|_- \langle f'(\lambda+b),\vec{\nu}(\mx) \rangle \chi(\lambda, u_B(t,\mx)-b) d\gamma(\mx) dt d\lambda \geq 0.\nonumber
\end{align} 
\end{definition} It is not difficult to see that if $u$ satisfies conditions of  Definition \ref{def2} then $u$ also satisfies Definition \ref{def-panov}. This will be proved in the last section.

Let us briefly comment Definition \ref{def2}. The first two terms on the left-hand sides of \eqref{semi+b} and \eqref{semi-b} are standard in the entropy admissibility concept (compare with Definition \ref{def-panov} and \eqref{a1-stand}) and they are related to the behaviour of the solution $u$ in the interior of $\Omega$ and on $t=0$. The final terms on the left-hand sides of \eqref{semi+b} and \eqref{semi-b} simply say that when the characteristics enter $\Omega$ (i.e. when the angle between the normal $\vec{\nu}$ and $f'(\lambda)$ is greater than $\pi/2$, i.e. when $\langle f'(\lambda_,\vec{\nu}(\mx) \rangle < 0 $) then we shall take the boundary data into account. Remark that we shifted a solution by $a$ in \eqref{semi+b} and by $b$ in \eqref{semi-b} since then $u-a\geq 0$ and $u-b\leq 0$, respectively, implying $\chi(\lambda, u(t,\mx)-a) \geq 0$ and $\chi(\lambda, u(t,\mx)-b) \leq 0$. This enabled us precise control of the behaviour of the solution at the boundary (see the last section).

Finally, let us remark that work in the field of numerical methods for conservation laws is
rather intensive. Most of the papers deal with Cauchy problems for
conservation laws (scalar conservation laws or systems; see e.g.
classical books \cite{HR, Lve} and references therein). As for
\eqref{cl1-hom}, \eqref{ic1}, \eqref{bc1}, there are not so many results
since the interest for this kind of problem has arisen relatively
recently. We mention \cite{AS, Vov} and references therein. For results in the case of systems, one can consult \cite{MS} where one can also find thorough overview of the subject.

The paper is organized as follows. In Section 2, we shall prove
convergence of the transport-collapse scheme for initial value
problems corresponding to \eqref{cl1}. In Section 3, we shall
introduce a transport-collapse type operator for \eqref{cl1-hom},
\eqref{ic1}, \eqref{bc1}, and the proof of its convergence toward
the entropy solution.



\section{Transport collapse scheme for the Cauchy problem for heterogeneous scalar conservation law}

The transport-collapse scheme is based on tracking of characteristics of conservation law \eqref{kin1}. In the homogeneous case (i.e. when the flux is independent of $(t,\mx)$), the characteristic have very simple form $x-f'(\lambda) t$ and it is significantly easier to analyse them than in the case when the flux is $(t,\mx)$-dependent. Thus, this section represents a non-trivial generalization of the method from \cite{brenier}. 

Let us first introduce assumptions on the flux $f$ from \eqref{cl1}. We assume $f=(f_1,\dots,f_d)\in C^2(\R_+^{d+1})$ and for some $b>a$
$$
f(t,\mx,a)=f(t,\mx,b)=0,
$$ and $a\leq u_0 \leq b$ for the initial condition $u_0$. Latter conditions provide the maximum principle for the entropy admissible solution to \eqref{cl1}, \eqref{ic1}. More precisely, the entropy admissible solution $u$ will stay bounded between $a$ and $b$. 

Let us now state properties of the function $\chi$.

\begin{proposition}\cite[page 1018]{brenier}
\label{chi}
It holds

\begin{itemize}

\item [a)]  $\forall u,v \in L^1(\R^d)$ such that $u \geq v \implies  \chi(\lambda,u) \geq \chi(\lambda,v)$;


\item [b)] $\forall u \in L^1(\R^d)$, $\forall g \in L^\infty(\R^d\times
\R)$, it holds \\ $ \iint \chi(\lambda,u) g(\mx,\lambda)d\mx
d\lambda=\int \left(\int_a^u g(\mx,\lambda)d\lambda \right)d\mx$;

In particular, if $g=G'_\lambda$ and $G(a)=0$, then $ \iint
\chi(\lambda,u) g(\mx,\lambda)d\mx d\lambda=\int G(\mx,u)d\mx$

\item [c)] $TV(u)=\int TV(\chi(\lambda,\cdot))d\lambda$.

\end{itemize}
\end{proposition}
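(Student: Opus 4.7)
The plan is to exploit two equivalent representations of $\chi$. First, the identity $\chi(\lambda,u)=H(u-\lambda)-H(-\lambda)$, where $H$ is the Heaviside function, which one checks by a short case split on the signs of $\lambda$ and $u$. Second, the ``signed indicator'' form $\chi(\lambda,u)=\mathrm{sgn}(u)\,\mathbf{1}_{I(u)}(\lambda)$ with $I(u)=[\min(0,u),\max(0,u)]$. Property (a) follows immediately from the Heaviside representation: if $u\geq v$ then $H(u-\lambda)\geq H(v-\lambda)$ pointwise in $\lambda$, while the second term $H(-\lambda)$ cancels, so $\chi(\lambda,u)\geq\chi(\lambda,v)$.

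For (b), I would fix $\mx$ and compute the inner integral over $\lambda$ using the signed indicator form. If $u(\mx)\geq 0$, then
$$
\int_{\R}\chi(\lambda,u(\mx))\,g(\mx,\lambda)\,d\lambda=\int_0^{u(\mx)}g(\mx,\lambda)\,d\lambda,
$$
and if $u(\mx)\leq 0$ the $-1$ on $[u(\mx),0]$ produces the same expression $\int_0^{u(\mx)}g\,d\lambda$ after flipping the limits. Under the convention used in the paper (shifting by $a$ so that $u-a\geq 0$), the base point $0$ is replaced by $a$, giving $\int_a^{u(\mx)}g(\mx,\lambda)\,d\lambda$; applying Fubini yields the stated double-integral identity. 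For the special case with $g=G'_\lambda$ and $G(\mx,a)=0$, the fundamental theorem of calculus gives $\int_a^{u(\mx)}G'_\lambda\,d\lambda=G(\mx,u(\mx))-G(\mx,a)=G(\mx,u(\mx))$, and integration in $\mx$ concludes.

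For (c), which is the least immediate of the three, I would invoke the coarea formula for BV functions. For each fixed $\lambda$, the map $\mx\mapsto \chi(\lambda,u(\mx))$ is a signed indicator of a sublevel or superlevel set of $u$: for $\lambda>0$ it equals $\mathbf{1}_{\{u\geq\lambda\}}$, and for $\lambda<0$ it equals $-\mathbf{1}_{\{u\leq\lambda\}}$. Consequently $TV(\chi(\lambda,\cdot))$ equals the perimeter of the corresponding level set, and using that a set and its complement share the same perimeter,
$$
\int_{\R}TV(\chi(\lambda,\cdot))\,d\lambda=\int_0^{\infty}\mathrm{Per}\{u>\lambda\}\,d\lambda+\int_{-\infty}^0\mathrm{Per}\{u>\lambda\}\,d\lambda=\int_{\R}\mathrm{Per}\{u>\lambda\}\,d\lambda,
$$
which equals $TV(u)$ by the coarea formula. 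The $\lambda=0$ contribution is negligible since it is a single value.

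Of the three parts, (c) is where the real work lies, because it appeals to a nontrivial geometric measure-theoretic fact (the BV coarea formula). Parts (a) and (b) reduce to elementary case analysis and Fubini. A minor care point is the convention choice aligning the base point of the $\lambda$-integral with the convention of $\chi$ (whether the support starts at $0$ or at the lower bound $a$); once this is fixed the identity in (b) is just the fundamental theorem of calculus in the $\lambda$ variable.
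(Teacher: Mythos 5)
Your proof is correct, but note that the paper itself offers no proof of this proposition --- it is stated with a citation to Brenier's original article --- so there is nothing internal to compare against; your write-up supplies the missing argument. Two remarks on the route you chose. For part (c) you invoke the BV coarea formula, which is heavier machinery than necessary: the identity $|u-v|=\int_{\R}|\chi(\lambda,u)-\chi(\lambda,v)|\,d\lambda$ (an immediate consequence of your signed-indicator representation, since for fixed $\lambda$ the integrand is the indicator of the interval between $\min(u,v)$ and $\max(u,v)$ when $\lambda$ separates them) combined with the difference-quotient characterization $TV(u)=\sup_{h\neq 0}\frac{1}{|h|}\int|u(\mx+h)-u(\mx)|\,d\mx$ gives (c) directly after an exchange of integrals, and this is in fact the form in which the paper actually uses item (c) later (in the proof of Proposition 6, item d), where $\int_{\R}TV(\chi(\eta,u(\cdot)))\,d\eta=TV(u)$ is applied to difference quotients). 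Either route is valid; the coarea argument should just be careful that $\mathrm{Per}\{u\geq\lambda\}=\mathrm{Per}\{u>\lambda\}$ only for a.e.\ $\lambda$, which you implicitly use. For part (b), you correctly flag the discrepancy between the lower limit $a$ in the statement and the base point $0$ in the definition of $\chi$ given in Theorem 3; as written the identity holds verbatim only when $a=0$ (an assumption the paper adopts later), or else one must read $\chi(\lambda,u)$ as the shifted function $\chi(\lambda-a,u-a)$. This is an inconsistency of the paper's conventions, not a gap in your argument.
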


The idea of the transport collapse scheme for the initial value
problem \eqref{cl1}, \eqref{ic1} is to solve problem \eqref{kin1},
\eqref{ickin} when we omit the right-hand side in \eqref{kin1}:
\begin{equation}
\label{TC} \pa_t h +\Div_{\mx,\lambda}[F(t,\mx,\lambda) h]=0, \ \
h|_{t=0}=\chi(\lambda,u_0(\mx)).
\end{equation} The solution of this equation is obtained via the method of characteristics. They are given by
\begin{equation}
\label{char}
\begin{cases}
 &\dot \mx= f'_\lambda, \ \ \mx|_{t=0}=\mx_0, \\
&\dot \lambda =-\sum\limits_{j=1}^d\pa_{x_j}f_j(t,\mx,\lambda), \ \ \lambda|_{t=0}=\lambda_0.
\end{cases}
\end{equation} For later purpose, we rewrite this system in the
integral form

\begin{equation}
\label{char1}
\begin{cases}
\mx&=\mx_0+\int_0^t f'_\lambda(t',\mx,\lambda)dt'\\
\lambda&=\lambda_0-\int_0^t\sum\limits_{j=1}^d\pa_{x_j}f_j(t',\mx,\lambda)dt'.
\end{cases}
\end{equation}

The solution to \eqref{TC} has the form
\begin{equation}
\label{s-char}
h(t,\mx,\lambda)=\chi(\lambda_0(t,\mx,\lambda),u_0(\mx_0(t,\mx,\lambda))).
\end{equation}

 To avoid
proliferation of symbols, denote for $\varphi=(\varphi_1,\dots,\varphi_d):\R^d\to \R^d$ the norms
\begin{equation}
\label{norm}
\begin{split}
&\|\nabla_{\mx} \varphi\|_{\infty}=\sup\limits_{1\leq k \leq d}\sup\limits_{\substack{x\in \R^d, \\ \!\! \Delta \mx >0}}
\frac{|\varphi_k(\mx+\Delta \mx)-\varphi_k(\mx)|}{|\Delta \mx |},\\
&\|\varphi\|_{\infty}=\sup\limits_{1\leq k \leq d}\sup\limits_{x\in \R^d}|\varphi_k(\mx)|,
\end{split}
\end{equation}where $|\;\cdot \;|$ denotes the Euclidean norm. We have the following properties of the characteristics.

\begin{proposition}
\label{charchar}
The characteristics $\mx_0=\mx_0(t,\mx,\lambda)$ and $\lambda_0 = \lambda_0(t,\mx,\lambda)$ satisfy the following continuity properties:
\begin{align}
\label{mx} |{\cal R}_{\mx}|&:=|\mx_0(t,\mx+\Delta
\mx,\lambda)-\mx_0(t,\mx,\lambda)|\\&\leq |\Delta \mx | \,\left( 1
+ \int_0^t \max\limits_{\lambda}\|\nabla_{\mx}
f'_\lambda(t',\cdot,\lambda)\|_{\infty} dt'\right). \nonumber
\\
 \label{lambda}
|{\cal R}_{\lambda}|&:=|\lambda_0(t,\mx+\Delta
\mx,\lambda)-\lambda_0(t,\mx,\lambda)|\\&\leq |\Delta \mx |\,
\int_0^t \max\limits_{\lambda}\| \nabla_{\mx}
\Div_{\mx}f(t',\cdot,\lambda)\|_{\infty} dt', \nonumber
\end{align} where the norms are given by \eqref{norm}.
\end{proposition}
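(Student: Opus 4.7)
The plan is to read off $\mx_0(t,\mx,\lambda)$ and $\lambda_0(t,\mx,\lambda)$ directly from the integral form \eqref{char1} (viewed as explicit expressions in terms of the endpoint $(\mx,\lambda)$) and then estimate the two differences term-by-term using the Lipschitz-in-$\mx$ norms introduced in \eqref{norm}. Concretely, solving \eqref{char1} for the initial data gives
\[
\mx_0(t,\mx,\lambda) = \mx - \int_0^t f'_\lambda(t', \mx, \lambda)\,dt', \qquad \lambda_0(t,\mx,\lambda) = \lambda + \int_0^t \Div_\mx f(t', \mx, \lambda)\,dt'.
\]

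Subtracting these expressions at the endpoints $(\mx+\Delta\mx,\lambda)$ and $(\mx,\lambda)$, one obtains
\[
\mx_0(t,\mx+\Delta\mx,\lambda) - \mx_0(t,\mx,\lambda) = \Delta\mx - \int_0^t \bigl[f'_\lambda(t',\mx+\Delta\mx,\lambda) - f'_\lambda(t',\mx,\lambda)\bigr]\,dt',
\]
and an analogous identity for $\lambda_0$ with no leading $\Delta$ term since the $\lambda$ endpoints coincide. For \eqref{mx} I then apply the triangle inequality together with the Lipschitz bound $|f'_\lambda(t',\mx+\Delta\mx,\lambda) - f'_\lambda(t',\mx,\lambda)| \leq \|\nabla_\mx f'_\lambda(t',\cdot,\lambda)\|_\infty |\Delta\mx|$ supplied by \eqref{norm}; majorizing $\|\nabla_\mx f'_\lambda(t',\cdot,\lambda)\|_\infty$ by $\max_\lambda \|\nabla_\mx f'_\lambda(t',\cdot,\lambda)\|_\infty$ inside the time integral and factoring out $|\Delta\mx|$ yields the claimed inequality. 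The bound \eqref{lambda} follows in the same way with $\Div_\mx f$ in place of $f'_\lambda$, and without the ``$1+$'' term since the $\lambda$ endpoints agree.

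There is no substantial obstacle; the verification is a few lines once one commits to the correct reading of \eqref{char1}, namely that the $(\mx,\lambda)$ inside the integrand are the (constant) endpoint values rather than the trajectory $(\mx(t'),\lambda(t'))$ of the ODE system \eqref{char}. The shape of the stated bounds---linear in $t$ and involving only $\mx$-Lipschitz norms, with no $\partial_\lambda$-derivative of $f'_\lambda$---confirms this is the intended interpretation; any other reading would require a coupled Grönwall argument and would produce an exponential, rather than linear, dependence on $t$.
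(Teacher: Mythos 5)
Your proof is correct and takes essentially the same route as the paper: the authors likewise write \eqref{char1} at the two endpoints $\mx+\Delta\mx$ and $\mx$, subtract, and apply the Lipschitz bound supplied by the norms \eqref{norm} to the integrand (and argue analogously for $\lambda_0$). Your reading of \eqref{char1} --- with the endpoint values, not the trajectory, inside the integrand, which is what yields a linear rather than Gr\"onwall-type exponential bound in $t$ --- is exactly the one the paper adopts.
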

\begin{proof} From \eqref{char1}, we have

\begin{align*}
\mx&=\mx_0(t,\mx,\lambda)+\int_0^t f'_\lambda(t',\mx,\lambda)dt'\\
\mx+\Delta \mx&=\mx_0(t,\mx+\Delta \mx,\lambda)+\int_0^t f'_\lambda(t',\mx+\Delta \mx,\lambda)dt'.
\end{align*} By subtracting those equations, we obtain:
\begin{align}
\label{ruj1}
&|\mx_0(t,\mx+\Delta \mx,\lambda)-\mx_0(t,\mx,\lambda)|\\
&\leq |\Delta \mx | +\int_0^t \max\limits_{\lambda}\|f'_\lambda(t',\mx+\Delta \mx,\lambda)-f'_\lambda(t',\mx,\lambda) \|_{\infty} dt'\nonumber\\
&\leq |\Delta \mx | +|\Delta \mx| \,\int_0^t
\max\limits_{\lambda}\|\nabla_{\mx} f'_\lambda(t',\cdot,\lambda)
\|_{\infty} dt'. \nonumber
\end{align} This proves \eqref{mx}. Inequality \eqref{lambda} is proved analogously. It holds
\begin{align*}
\lambda&=\lambda_0(t,\mx+\Delta \mx,\lambda)-\int_0^t\sum\limits_{j=1}^d\pa_{x_j}f_j(t',\mx+\Delta \mx,\lambda)dt',\\
\lambda&=\lambda_0(t,\mx,\lambda)-\int_0^t\sum\limits_{j=1}^d\pa_{x_j}f_j(t',\mx,\lambda)dt'
\end{align*} and it is enough to subtract the last two equalities,
and to follow the procedure from \eqref{ruj1}. \end{proof}

Let us now define the transport-collapse operator $T$.

\begin{definition}
The transport collapse operator $T(t)$ is defined for every $u\in
L^1(\R^d)$ by \vspace{-0.2cm}
\begin{equation}
\label{TC1} T(t)u(\mx)=\int
\chi(\lambda_0(t,\mx,\lambda),u(\mx_0(t,\mx,\lambda))) d\lambda.
\end{equation}
\end{definition}
It satisfies the following properties which are the same as the ones
from \cite[Proposition 1]{brenier}.

\begin{proposition}
\label{basic_prop} It holds for every $u,v \in L^1(\R^d)$

\begin{itemize}

\item [a)] $u \leq v$ a.e. implies $T(t)u \leq T(t)v$ a.e;

\item [b)] $\int T(t)u(\mx)d\mx = \int u(\mx) d\mx$;

\item [c)] the operator $T(t)$ is non-expansive
$$
\|T(t)u-T(t)v\|_{L^1(\R^d)} \leq \|u-v\|_{L^1(\R^d)},
$$and, in particular, $\|T(t)u\|_{L^1(\R^d)} \leq \|u\|_{L^1(\R^d)}$;

\item [d)] $TV(T(t)u)\leq (1+C_1 t)\,TV(u)+t C_2$, where $TV$ is the total
variation and $C_1$ and $C_2$ are appropriate constants depending on
the $C^2$-bounds of the flux $f$;

\item [e)] $\|T(t)u-u\|_{L^1(\R^d)} \leq C_2 TV(u)t+t C_1$ for the constants $C_1$ and $C_2$ from the previous item;


\end{itemize}

\end{proposition}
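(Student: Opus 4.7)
The structural backbone is that the vector field $F=(f'_\lambda,-\Div_\mx f)$ driving \eqref{char} is divergence-free on $(\mx,\lambda)$-space, since $\Div_\mx f'_\lambda - \pa_\lambda \Div_\mx f=0$. Liouville's theorem then guarantees that the flow $(\mx,\lambda)\mapsto(\mx_0(t,\mx,\lambda),\lambda_0(t,\mx,\lambda))$ is volume-preserving, so the change of variables $(\mx,\lambda)\to(\mx_0,\lambda_0)$ has unit Jacobian. I will combine this with two pointwise identities derived directly from the definition of $\chi$: $\int\chi(\lambda,u)\,d\lambda=u$ and $\int|\chi(\lambda,u)-\chi(\lambda,v)|\,d\lambda=|u-v|$. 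The assumption $a\leq u\leq b$ ensures $\chi(\lambda,u)$ is supported in $\lambda\in[a,b]$, so all $\lambda$-integrals are over a bounded interval.

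Items (a)--(c) are then short. (a) follows from Proposition \ref{chi}(a) applied pointwise in $\lambda$. For (b), Fubini recasts $\int T(t)u\,d\mx$ as $\iint\chi(\lambda_0,u(\mx_0))\,d\mx\,d\lambda$; the volume-preserving change of variables turns this into $\iint\chi(\lambda_0,u(\mx_0))\,d\mx_0\,d\lambda_0=\int u(\mx_0)\,d\mx_0$. For (c), the pointwise bound $|T(t)u-T(t)v|(\mx)\leq \int|\chi(\lambda_0,u(\mx_0))-\chi(\lambda_0,v(\mx_0))|\,d\lambda$, integration in $\mx$, the same change of variables, and the second identity deliver the $L^1$-contraction; taking $v\equiv 0$ covers the remaining claim.

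The main obstacle is (d). For each fixed $\lambda$, Proposition \ref{charchar} makes $\mx\mapsto(\mx_0(t,\mx,\lambda),\lambda_0(t,\mx,\lambda))$ Lipschitz, with constants $1+tA_t$ in $\mx_0$ and $tB_t$ in $\lambda_0$, where $A_t$ and $B_t$ are the time-integrated sup-norms appearing in \eqref{mx} and \eqref{lambda}. I would split
\begin{align*}
\chi(\lambda_0',u(\mx_0'))-\chi(\lambda_0,u(\mx_0))
&=[\chi(\lambda_0',u(\mx_0'))-\chi(\lambda_0,u(\mx_0'))]\\
&\quad+[\chi(\lambda_0,u(\mx_0'))-\chi(\lambda_0,u(\mx_0))]
\end{align*}
and bound each bracket in $TV$-in-$\mx$. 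The second bracket, integrated in $\lambda$ and combined with Proposition \ref{chi}(c), contributes $(1+tA_t)\,TV(u)$. The first bracket is concentrated on the jump set of $\lambda_0\mapsto\chi(\lambda_0,u(\mx_0'))$; since $\lambda_0$ is $tB_t$-Lipschitz in $\mx$ and $\chi$ jumps by at most $2$ in $\lambda$, integrating in $\lambda$ over the compact support $[a,b]$ produces a linear-in-$t$ correction of the form $tC_2$. Finally, (e) is obtained from the same comparison expanded to first order in $t$: using $\int\chi(\lambda,u(\mx))\,d\lambda=u(\mx)$ to write $T(t)u(\mx)-u(\mx)=\int[\chi(\lambda_0,u(\mx_0))-\chi(\lambda,u(\mx))]\,d\lambda$ together with the bounds $\mx_0=\mx+O(t)$ and $\lambda_0=\lambda+O(t)$ coming from \eqref{char1}, the same splitting controls the $L^1$-norm by $C_2\,TV(u)\,t+tC_1$, where both constants are traceable to the $C^2$-bounds of $f$.
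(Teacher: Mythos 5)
Your proposal is correct and, for items (a), (b), (d) and (e), follows essentially the same route as the paper: monotonicity of $\chi$ for (a); the observation that $\Div_{(\mx,\lambda)}F=0$ makes the characteristic flow measure-preserving, hence the unit-Jacobian change of variables for (b); the splitting of $\chi(\lambda_0',u(\mx_0'))-\chi(\lambda_0,u(\mx_0))$ into a $\lambda$-shift term and an $\mx$-shift term for (d), with the second controlled by Proposition \ref{chi}(c) and Proposition \ref{charchar} to give $(1+C_1t)TV(u)$ and the first giving the $tC_2$ correction; and the same first-order comparison for (e). The one genuine divergence is item (c): the paper deduces the $L^1$-contraction abstractly from (a) and (b) via the Crandall--Tartar lemma on order-preserving, integral-conserving maps, whereas you prove it directly from the pointwise identity $\int|\chi(\lambda,u)-\chi(\lambda,v)|\,d\lambda=|u-v|$ combined with the volume-preserving change of variables. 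Both are valid; your direct computation is more elementary and self-contained, while the paper's route reuses (a) and (b) and avoids any further manipulation of $\chi$. One small caveat you share with the paper: in (d) the bound on the $\lambda$-shift bracket, $\int|\chi(\eta+\mathcal{R}_\lambda,w)-\chi(\eta,w)|\,d\eta\leq 2|\mathcal{R}_\lambda|$ for $w\neq 0$, still has to be integrated over $\my\in\R^d$, so the constant $C_2$ implicitly involves the measure of the support of $u$ (or a localization); this is not an error relative to the paper, which glosses over the same point.
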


\begin{proof}

Item a) directly follows from the definition of the transport
collapse operator $T(t)$.

As for the item b), for every fixed $t>0$, denote by $Z=(t,\mx,\lambda)$ characteristics from
\eqref{char}. Notice that, since $\Div_{(\mx,\lambda)} F=0$, it
holds
\begin{equation}
\label{1} \Big|\det\frac{\pa Z(t,\mx_0,\lambda_0)}{\pa
(\mx_0,\lambda_0)} \Big|=1.
\end{equation} Therefore, according to Proposition \ref{chi},
\begin{align}
\label{11}
&\int T(t)u(\mx) d\mx=\int_{\R^{d+1}}\chi(\lambda_0(t,\mx,\lambda),u(\mx_0(t,\mx,\lambda)) d\mx d\lambda\\
&={\mx_0(t,\mx,\lambda)=\my  \choose \lambda_0(t,\mx,\lambda)=\eta
}=\int_{\R^{d+1}}\chi(\eta,u(\my)) \Big|\det\frac{\pa
Z(t,\mx_0,\lambda_0)}{\pa (\mx_0,\lambda_0)} \Big| d\my d\eta=\int
u(\my)d\my.
\nonumber
\end{align}

Item c) now follows from a) and b) according to the Crandall-Tartar
lemma about non-expansive order preserving mappings \cite[Proposition 3.1]{7}.

Let us now prove item d). We have
\begin{align*}
&\int\limits_{\R^d} |T(t)u(\mx+\Delta
\mx)-T(t)u(\mx)|d\mx\\&=\!\int\limits_{\R^{d}}\!|\!\int\limits_{\R}\!\chi(\lambda_0(t,\mx\!+\!\Delta
\mx,\lambda),u(\mx_0(t,\mx\!+\!\Delta
\mx,\lambda)))\!-\!\chi(\lambda_0(\mx_0(t,\mx,\lambda),u(\mx_0(t,\mx,\lambda)))d\lambda| d\mx \\
&\leq \!
\!\int\limits_{\R^{d+1}}\!|\!\chi(\lambda_0(t,\mx\!+\!\Delta
\mx,\lambda),u(\mx_0(t,\mx\!+\!\Delta
\mx,\lambda)))\!-\!\chi(\lambda_0(\mx_0(t,\mx,\lambda),u(\mx_0(t,\mx,\lambda)))|
d\mx d\lambda
\end{align*} We next write $\mx_0(t,\mx+\Delta
\mx,\lambda)=\mx_0(t,\mx,\lambda)+{\cal R}_{\mx}(t,\mx,\lambda)$ and
$\lambda_0(t,\mx+\Delta \mx,\lambda)=\lambda_0(t,\mx,\lambda)+{\cal
R}_{\lambda}(t,\mx,\lambda)$, where ${\cal R}_{\mx}$ and ${\cal
R}_{\lambda}$ are estimated in \eqref{mx}, and introduce the change
of variables $\mx_0(t,\mx,\lambda)=\my$,
$\lambda_0(t,\mx,\lambda)=\eta$ (keep in mind \eqref{1}). We obtain

\begin{align*}
&\int_{\R^d} |T(t)u(\mx+\Delta \mx)-T(t)u(\mx)|d\mx\\
&\leq \int_{\R^{d+1}}|\chi(\eta+{\cal R}_\lambda,u(\my+{\cal R}_\mx))-\chi(\eta,u(\my))| d\my d\eta\\
&\leq \int_{\R^{d+1}}|\chi(\eta+{\cal R}_\lambda,u(\my+{\cal R}_\mx))-\chi(\eta,u(\my+{\cal R}_\mx))| d\my d\eta\\
& \quad + \int_{\R^{d+1}}|\chi(\eta,u(\my+{\cal R}_\mx))-\chi(\eta,u(\my))| d\my d\eta\\
& \leq \|{\cal R}_\lambda\|_{\infty} \, TV(\chi)+ \|{\cal
R}_\mx\|_{\infty}\int_{\R}TV(\chi(\eta,u(\cdot)))d\eta = 4\|{\cal
R}_\lambda\|_{\infty}+\|{\cal R}_\mx\|_{\infty} TV(u),
\end{align*} since the characteristics are of $C^1$-class, $TV(\chi)=4$,
and since Proposition \ref{chi}, item c) holds. Remark that in the case when $u\geq 0$ we actually have $\chi(\lambda,u)={\rm sgn}_+(u-\lambda)$ and in that case $TV(\chi)=1$. Having in mind Proposition
\ref{charchar}, we conclude the proof of d). We remark that
$$
C_1=4\max\limits_{t,\lambda}\| \nabla_{\mx} f(t,\cdot,\lambda)
\|_\infty, \ \ C_2= \max\limits_{t,\lambda}\|\nabla_{\mx}\Div_{\mx}
f(t,\cdot,\lambda)\|_{\infty}.
$$

It remains to prove item e). Using \eqref{char1}, as in to the proof
of item d), we have
\begin{align*}
&\|T(t)u-u\|_{L^1(\R^d)} \leq \int_{\R^{d+1}}
|\chi(\lambda_0(t,\mx,\lambda),u(x_0(t,\mx,\lambda)))-\chi(\lambda,u(\mx))|d\mx
d\lambda
\\ & = \int_{\R^{d+1}} |\chi(\lambda+{\cal R}_\lambda,u(\mx+{\cal R}_{\mx}))-\chi(\lambda,u(\mx))|d\lambda d\mx
\\ & \leq  C_1\,t\, TV(u)+C_2 \, t,
\end{align*} which immediately gives e).
\end{proof}


We also need the following result.

\begin{proposition}
\label{prop_T}
For any smooth positive test function $\varphi$, any $u\in L^1(\R)$ such that $a\leq u\leq b$,
and convex Lipschitz function $V:\R\to \R$, we have
\begin{align}
\label{1220} \int(V(T(t)u)-V(u))(\mx) \varphi(\mx) d\mx &\leq
\int_0^t \int B_V(t',\mx,u(\mx)) \nabla \varphi d\mx dt'\\&+
\int_0^t\int \int_a^u \Div_{\mx}f(t,\mx,\lambda)
V''(\lambda)d\lambda dt'+ o(t), \ \ t\to 0 \nonumber
\end{align} where $B_V(t,\mx,u)=\int_a^u
f'_\lambda(t,\mx,\lambda)V'(\lambda)d\lambda$, and $o(t)$ depends only on the $L^\infty$-bound of $u$.
\end{proposition}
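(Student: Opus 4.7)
My strategy is to convert the nonlinear quantity $V(T(t)u)$ into a linear functional of the kinetic density $h$ from \eqref{TC} via a rearrangement inequality, and then exploit the transport equation that $h$ satisfies.

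First I would make the harmless reduction $V(a)=0$ (subtracting a constant affects neither side of \eqref{1220}). By Proposition \ref{chi} b),
\[
V(u(\mx))=\int V'(\lambda)\,\chi(\lambda,u(\mx))\,d\lambda,
\]
while $T(t)u(\mx)=\int h(t,\mx,\lambda)\,d\lambda$, where $h$ solves \eqref{TC} with initial datum $\chi(\lambda,u(\mx))$. Since $f(t,\mx,a)=f(t,\mx,b)=0$ forces $\Div_\mx f(t,\mx,a)=\Div_\mx f(t,\mx,b)=0$, the $\lambda$-component of the drift $F=(f'_\lambda,-\Div_\mx f)$ vanishes on the planes $\lambda=a$ and $\lambda=b$, so these planes are invariant under the characteristic flow \eqref{char}. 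Consequently $h(s,\mx,\cdot)$ takes values in $\{0,1\}$ and is supported in $[a,b]$ for every $s\ge 0$.

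The key step is a rearrangement/Jensen-type bound: since $V'$ is non-decreasing and $h(t,\mx,\cdot)\in\{0,1\}$ has $\lambda$-integral equal to $T(t)u(\mx)-a$, the integral $\int V'(\lambda)\,h(t,\mx,\lambda)\,d\lambda$ is minimised when this unit of mass is concentrated on the smallest possible $\lambda$'s, namely on $[a,T(t)u(\mx)]$. Therefore
\[
V(T(t)u(\mx))=\int_a^{T(t)u(\mx)}V'(\lambda)\,d\lambda\;\le\;\int V'(\lambda)\,h(t,\mx,\lambda)\,d\lambda,
\]
and subtracting the exact identity for $V(u)$ yields
\[
V(T(t)u(\mx))-V(u(\mx))\;\le\;\int V'(\lambda)\bigl[h(t,\mx,\lambda)-\chi(\lambda,u(\mx))\bigr]\,d\lambda.
\]

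Next I would multiply by $\varphi(\mx)\ge 0$, integrate in $\mx$, and use the transport equation to write, in the distributional sense,
\[
h(t,\mx,\lambda)-\chi(\lambda,u(\mx))=-\int_0^t\Div_{(\mx,\lambda)}\bigl(F(s,\mx,\lambda)\,h(s,\mx,\lambda)\bigr)\,ds.
\]
Integration by parts in $(\mx,\lambda)$, with boundary terms vanishing by compact $\lambda$-support of $h$ and compact $\mx$-support of $\varphi$, converts the right-hand side into
\[
\int_0^t\!\!\iint h(s,\mx,\lambda)\bigl[V'(\lambda)\,f'_\lambda(s,\mx,\lambda)\cdot\nabla\varphi(\mx)-V''(\lambda)\,\Div_\mx f(s,\mx,\lambda)\,\varphi(\mx)\bigr]\,d\mx\,d\lambda\,ds,
\]
with the sign of the second term determined by the $\lambda$-IBP against the component $-\Div_\mx f$ of $F$ (and $V''$ interpreted as the non-negative measure associated with convex Lipschitz $V$). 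Finally, I would replace $h(s)$ by $\chi(\cdot,u)$ at cost $o(t)$: since $(\mx_0,\lambda_0)(s,\mx,\lambda)\to(\mx,\lambda)$ as $s\to 0^+$, $h(s,\mx,\lambda)\to\chi(\lambda,u(\mx))$ a.e., and $\|h(s)-\chi(\cdot,u)\|_{L^1_{\mx,\lambda}}\to 0$ by dominated convergence (using $|h|\le 1$ and the uniform compact $\lambda$-support). Hence the time integral of the difference is $o(t)$, and a final application of Proposition \ref{chi} b) identifies the remaining $\chi$-integrals as $B_V(s,\mx,u(\mx))\cdot\nabla\varphi$ and $\int_a^{u(\mx)}\Div_\mx f(s,\mx,\lambda)\,V''(\lambda)\,d\lambda$, respectively, producing \eqref{1220}.

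\emph{Main obstacle.} The crucial, non-routine step is the rearrangement inequality, which linearises $V(T(t)u)$ in $h$; its validity relies on $h(s,\mx,\cdot)$ being supported in $[a,b]$, which is precisely what the compatibility condition $f(t,\mx,a)=f(t,\mx,b)=0$ is designed to guarantee through the invariance of the planes $\lambda=a,b$ under \eqref{char}. Once this is in place, the transport equation, the IBP, and the $o(t)$ absorption are technically standard.
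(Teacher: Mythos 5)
Your proposal is correct in substance, and both of its main steps take a genuinely different route from the paper's. For the crucial linearisation $V(T(t)u)-V(u)\le\int V'(\lambda)\bigl(h(t,\mx,\lambda)-\chi(\lambda,u(\mx))\bigr)\,d\lambda$ (the paper's \eqref{****}), the paper represents $\lambda\mapsto h(t,\mx,\lambda)$ through the alternating-sum formula \eqref{pm} over the zero set $\{\omega_0,\dots,\omega_{2p}\}$ and invokes the convexity inequality \eqref{conv} for alternating sums; you instead use a bathtub rearrangement, which needs only that $h(t,\mx,\cdot)$ is $\{0,1\}$-valued and supported in $[a,b]$ --- a fact you correctly extract from the invariance of the planes $\lambda=a$ and $\lambda=b$ under the flow \eqref{char}. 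Your route is cleaner and bypasses the paper's unproven structural claims (finite, odd-cardinality zero set obtained by ``continuous deformation''). For the second half, the paper changes variables along characteristics using the unit Jacobian \eqref{1} and Taylor-expands $\varphi$ and $V'$; you use the distributional Duhamel form of the transport equation, integrate by parts against $V'(\lambda)\varphi(\mx)$, and absorb $\|h(s)-\chi(\cdot,u)\|_{L^1}$ into $o(t)$. These are equivalent at leading order; your version is shorter, though for the iteration in Theorem \ref{mainthm} you should quantify the absorption as $O(s)\,(1+TV(u))$ (exactly as in Proposition \ref{basic_prop}, items d) and e)) rather than appeal to dominated convergence, since the proposition is applied to $T(t/n)^j u$ and the errors must be summable over $n$ steps.

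Two caveats. First, your rearrangement compares $\{h=1\}$ with the interval $[a,\,a+|\{h=1\}|]$, which tacitly places the pivot of $\chi$ at $a$; with the paper's $\chi$ (pivot at $0$) the argument is valid exactly under the normalisation $a=0$, $u\ge 0$, which the paper itself adopts inside this proof, so this is only bookkeeping --- but for $a<0$ the set $\{h=1\}$ is not a priori contained in $[0,\infty)$ and the naive comparison with $[0,T(t)u]$ can fail, so the normalisation should be stated. Second, your computation produces the zeroth-order term as $-\int_0^t\!\int\varphi(\mx)\int_a^u\Div_{\mx} f(s,\mx,\lambda)\,V''(\lambda)\,d\lambda\,d\mx\,ds$, i.e.\ with a minus sign and a factor $\varphi$, whereas \eqref{1220} as printed carries a plus sign and omits $\varphi\,d\mx$. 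Your sign is the one consistent with the smooth entropy identity $\pa_t V(u)+\Div_{\mx} B_V+\int_a^u\Div_{\mx} f\,V''\,d\lambda=0$ and with the paper's own intermediate step \eqref{ruj21}; the printed statement appears to contain typos here, so this discrepancy should not be counted against your argument.
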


\begin{proof}
Remark first that for any fixed $(t,\mx)$, from the definition of
the function $\chi$, it follows for any $C^1$-function $G$
\begin{equation}
\label{pm} \int G'(\lambda)\chi(\lambda_0(t,\mx,\lambda), u(\mx_0(t,\mx,\lambda)))
d\lambda=\sum\limits_{k=0}^{2p} (-1)^k G(\omega_k)-G(0),
\end{equation}where the increasing sequence $(\omega_k)$,
$k=0,\dots,2p$, belongs to the set $N\!ul=\{\lambda \in [a,b]:
\lambda_0(t,\mx,\lambda)=u(\mx_0(t,\mx,\lambda))\}$ (since the
entropy solution to \eqref{cl1}, \eqref{ic1} takes values in the
interval $(a,b)$). Indeed, for almost every $(t,\mx)\in \R^+\times \R$, it holds
$
N\!ul=\{\lambda \in [a,b]:
\lambda_0(t,\mx,\lambda)=u(\mx_0(t,\mx,\lambda))\}=\{\omega_0,\dots,\omega_{2p}\}.
$  In the intervals $(\omega_k,\omega_{k+1})$ and $(\omega_{k+1},\omega_{k})$ the function $ \lambda \mapsto \lambda_0(t,\mx,\lambda)-u(\mx_0(t,\mx,\lambda))$ has different signs, and we can assume
\begin{equation*} 
\begin{split}
\lambda_0(t,\mx,\lambda)>u(\mx_0(t,\mx,\lambda)), \ \ \lambda \in (\omega_{2s},\omega_{2s+1}), \; s=0,\dots,p;\\
\lambda_0(t,\mx,\lambda)<u(\mx_0(t,\mx,\lambda)), \ \ \lambda \in (\omega_{2m+1},\omega_{2m}), \; m=0,\dots,p.
\end{split}
\end{equation*}

To be more concise, recall that we assumed $a=0$ and $b>0$. According to definition of the kinetic function $\chi$, we see that $\chi(\lambda_0(t,\mx,\lambda),\mx_0(t,\mx,\lambda))=1$ for $\lambda \in (\omega_{2m+1},\omega_{2m})$,  $m=0,\dots,p$, and $\chi(\lambda_0(t,\mx,\lambda),\mx_0(t,\mx,\lambda))=0$ for $\lambda \in (\omega_{2s},\omega_{2s+1})$,  $s=0,\dots,p$. From here, \eqref{pm} immediately follows.

Remark that the set has odd cardinality since the
multivalued solution is obtained by continuous transformation from
the graph of initial value \cite[page 1016]{brenier}. Moreover, due
to the mean value theorem, the following relation holds for any
convex function $V$ (see e.g. \cite[p. 51]{Dieux}):

\begin{equation}
\label{conv} V(\sum\limits_{k=0}^{2p} (-1)^k \omega_k)\leq
\sum\limits_{k=0}^{2p} (-1)^k V(\omega_k).
\end{equation}

From \eqref{pm} and \eqref{conv}, it follows
\begin{align}
\label{**}
&V(T(t)u(\mx))=V(\int
\chi(\lambda_0(t,\mx,\lambda),u(t,\mx_0(t,\mx,\lambda)))d\lambda) \overset{\eqref{pm}}{=} V(\sum\limits_{k=0}^{2p}
(-1)^k \omega_k)\\
&\overset{\eqref{conv}}{\leq} \sum\limits_{k=0}^{2p} (-1)^k V(\omega_k)\overset{\eqref{pm}}{=} \int
V'(\lambda)\chi(\lambda_0(t,\mx,\lambda), u(\mx_0(t,\mx,\lambda))) d\lambda+V(0).
\nonumber
\end{align} Moreover, it holds
\begin{equation}
\label{***}
V(u(\mx))=\int V'(\lambda) \chi(\lambda,u(\mx))d\lambda-V(0).
\end{equation} Subtracting \eqref{***} from \eqref{**}, we reach to
\begin{equation}
\label{****}
V(T(t)u(\mx))-V(u(\mx))\leq \int
V'(\lambda)\left(\chi(\lambda_0(t,\mx,\lambda), u(\mx_0(t,\mx,\lambda)))-\chi(\lambda,u(\mx))\right) d\lambda.
\end{equation}

We have from here 
\begin{align}
\label{ruj2}
&\int\left( V(T(t)u(\mx))-V(u(\mx))\right) \varphi(\mx) d\mx \\
\nonumber &\leq \iint \left( V'(\lambda)
\chi(\lambda_0(t,\mx,\lambda),u(\mx_0(t,\mx,\lambda))) - V'(\lambda)
\chi(\lambda,u(\mx)) \right) \varphi(\mx) d\mx d\lambda\\
\label{ruj3}&=\! \iint  V'(\lambda_0(t,\mx,\lambda))
\chi(\lambda_0(t,\mx,\lambda),u(\mx_0(t,\mx,\lambda)))
(\varphi(\mx)\!-\!\varphi(\mx_0(t,\mx,\lambda)))d\mx
d\lambda\\
\label{ruj5} & +\! \iint
(V'(\lambda)\!-\!V'(\lambda_0(t,\mx,\lambda)))
\chi(\lambda_0(t,\mx,\lambda),u(\mx_0(t,\mx,\lambda)))
\varphi(\mx)d\mx d\lambda\\
\nonumber &+ \Big(\iint  V'(\lambda_0(t,\mx,\lambda))
\chi(\lambda_0(t,\mx,\lambda),u(\mx_0(t,\mx,\lambda))) \,
\varphi(\mx_0(t,\mx,\lambda))d\mx d\lambda
\\&\qquad\qquad\qquad\qquad\qquad\qquad\qquad\qquad-\iint V'(\lambda) \chi(\lambda,u(\mx))
\varphi(\mx) d\mx d\lambda\Big). \label{ruj7}
\end{align} The two terms from \eqref{ruj7} cancel according to
\eqref{1}.  Indeed, using the change of variables from \eqref{1}, with the notation from \eqref{11}, we conclude  
\begin{align*}
&\iint  V'(\lambda_0(t,\mx,\lambda))
\chi(\lambda_0(t,\mx,\lambda),u(\mx_0(t,\mx,\lambda))) \,
\varphi(\mx_0(t,\mx,\lambda))d\mx d\lambda\\&=\iint V'(\eta) \chi(\eta,u(\my))
\varphi(\my) d\my d\eta.
\end{align*} 

Let us now consider the term from \eqref{ruj3}. Using the Taylor formula

\begin{align}
\label{ruj9} &\iint  V'(\lambda_0(t,\mx,\lambda))
\chi(\lambda_0(t,\mx,\lambda),u(\mx_0(t,\mx,\lambda)))
(\varphi(\mx_0(t,\mx,\lambda))-\varphi(\mx)) d\mx d\lambda\\
&= \iint  V'(\lambda_0(t,\mx,\lambda))
\chi(\lambda_0(t,\mx,\lambda),u(\mx_0(t,\mx,\lambda)))
(\mx_0(t,\mx,\lambda)-\mx)\cdot \nabla
\varphi(\mx_0(t,\mx,\lambda)) d\mx d\lambda\nonumber\\
&+ \frac{1}{2}\iint  V'(\lambda_0(t,\mx,\lambda))
\chi(\lambda_0(t,\mx,\lambda),u(\mx_0(t,\mx,\lambda))) D^2
\varphi(\tilde{\mx}) \,(\mx_0(t,\mx,\lambda)-\mx)^2 d\mx d\lambda,
\nonumber
\end{align} where $\tilde{\mx}$ is a point belonging to a neighbourhood of $x_0(t,\mx,\lambda)$. To further estimate the  latter term, we expand the
function $f'_\lambda(t',\mx,\lambda)$ into the Taylor expansion
around $\mx_0$ and take \eqref{char1} into account:

\begin{align}
\label{ruj13} \mx&-\mx_0(t,\mx,\lambda)= \int_0^t
f'_\lambda(t',\mx,\lambda)dt'=\int_0^t
f'_\lambda(t',\mx_0(t,\mx,\lambda),\lambda)dt'\\&+(\mx_0(t,\mx,\lambda)-\mx)\cdot\int_0^t
\nabla_x f'_\lambda(t',\tilde{\mx},\lambda)dt'=\int_0^t
f'_\lambda(t',\mx_0(t,\mx,\lambda),\lambda)dt'+{\cal O}(t^2),
\nonumber
\end{align} since clearly $\mx_0(t,\mx,\lambda)-\mx={\cal O}(t)$ and $\int_0^t
f'_\lambda(t',\mx,\lambda)dt'={\cal O}(t)$. Inserting this into
\eqref{ruj9} and applying the change of variables from \eqref{1}, we
conclude using item b) from \eqref{chi}:
\begin{align}
\label{ruj11} \iint  V'(\lambda_0(t,\mx,\lambda)) \chi(\lambda,u(\mx))
(\varphi(\mx_0(t,\mx,\lambda))-\varphi(\mx)) d\mx
d\lambda&\\=\int_0^t B_V(t',\mx,u(\mx)) \nabla \varphi d\mx dt'+
{\cal O}(t^2).& \nonumber
\end{align}

To deal with the remaining term from \eqref{ruj5}, we shall expand
the function $V'$ into the Taylor series around $\lambda_0$.  We
have

\begin{align}
\label{ruj19} &\iint  (V'(\lambda)-V'(\lambda_0))
\chi(\lambda_0(t,\mx,\lambda),u(\mx_0(t,\mx,\lambda))) \,
\varphi(\mx)d\mx d\lambda\\
&=\iint
V''(\lambda_0(t,\mx,\lambda))(\lambda-\lambda_0((t,\mx,\lambda)))
\chi(\lambda_0(t,\mx,\lambda),u(\mx_0(t,\mx,\lambda))) \,
\varphi(\mx)d\mx d\lambda \nonumber\\&+{\cal
O}(\|\lambda-\lambda_0((t,\mx,\lambda)) \|^2_{L^1({\rm
supp}(\varphi)\times (a,b))}) \nonumber
\end{align} Applying the procedure as in \eqref{ruj13} and having in mind \eqref{char1}, we reach to
the estimate

\begin{equation}
\label{ruj15} \lambda_0(t,\mx,\lambda)-\lambda= -\int_0^t
\sum\limits_{j=1}^d f_j(t',\mx_0(t,\mx,\lambda),\lambda)dt'+{\cal
O}(t^2).
\end{equation} If we notice that $\|\lambda-\lambda_0((t,\mx,\lambda)) \|^2_{L^1({\rm
supp}(\varphi)\times (a,b))}={\cal O}(t^2)$, from \eqref{ruj19} and
\eqref{ruj15}, we conclude

\begin{align}
\label{ruj21} &\iint  (V'(\lambda)-V'(\lambda_0))
\chi(\lambda_0(t,\mx,\lambda),u(\mx_0(t,\mx,\lambda))) \,
\varphi(\mx_0(t,\mx,\lambda))d\mx d\lambda\\&=-\int_0^t \int
\int_a^u \sum\limits_{j=1}^d f_j(t',\mx_0(t,\mx,\lambda),\lambda)
\, \varphi(\mx) d\lambda d\mx dt'+{\cal O}(t^2) \nonumber
\end{align} Combining \eqref{ruj2}, \eqref{ruj11}, and
\eqref{ruj21}, we conclude the theorem. \end{proof}

A consequence of Proposition \ref{basic_prop} and Proposition
\ref{prop_T} is the following theorem:

\begin{theorem}
\label{mainthm}
Denote
\begin{equation}
\label{TCoper} S_n(t)u=(1-\alpha)T(\frac{t}{n})^ku\,+\,\alpha
T(\frac{t}{n})^{k+1}u,
\end{equation}where
\begin{equation}
\label{alpha}
t=\frac{(k+\alpha)}{n}, \ \ k\in \N, \ \ \alpha\in [0,1).
\end{equation}
For each initial value $u_0\in L^1(\R^d)$ such that
$a\leq u_0 \leq b$, the unique entropy solution of \eqref{cl1},
\eqref{ic1} at time $t$ is given by the formula
$$
u(t,\cdot)=L^1- \lim\limits_{n\to \infty} S_n(t)u_0.
$$
\end{theorem}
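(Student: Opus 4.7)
The plan is to prove the theorem in three stages: (i) establish precompactness of $\{S_n(\cdot)u_0\}$ in $L^1_{loc}$, (ii) verify that any subsequential limit satisfies the entropy inequality \eqref{a1}, and (iii) invoke Kruzhkov's uniqueness theorem to upgrade subsequential to full $L^1$-convergence.

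For step (i), I would iterate items (b)--(e) of Proposition \ref{basic_prop}. Writing $\tau = t/n$ and iterating (d), one obtains $TV_{\mx}(T(\tau)^k u_0)\leq (1+C_1\tau)^k TV(u_0)+C_2\tau\sum_{j=0}^{k-1}(1+C_1\tau)^j$, which for $k\tau\leq T$ is bounded by $e^{C_1 T}(TV(u_0)+C_2 T/C_1)$; hence $TV_{\mx}(S_n(t)u_0)$ is uniformly bounded on compact time intervals. Item (e), combined with the non-expansiveness (c), yields equicontinuity in time, $\|S_n(t+h)u_0-S_n(t)u_0\|_{L^1}=O(h)+O(1/n)$, while finite propagation speed (boundedness of $f'_\lambda$ on $[a,b]$) together with the uniform BV bound controls spatial tails. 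A Kolmogorov--Riesz and diagonal argument then extracts a subsequence converging in $C([0,T];L^1_{loc}(\R^d))$ to some $u$ with $a\leq u\leq b$.

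For step (ii), fix a convex $V\in C^2(\R)$ and nonnegative $\varphi\in C^1_c(\R^d_+)$; the Kruzhkov case follows by approximating $|\cdot-k|$ by smooth convex functions. With $\tau=t_0/n$ exceeding the time-support of $\varphi$, $t_k=k\tau$, and $\varphi_k(\mx)=\varphi(t_k,\mx)$, Abel summation yields
\begin{align*}
&-\int V(u_0)\varphi_0\, d\mx -\int_0^{\infty}\!\!\int V(S_n(t)u_0)\,\pa_t \varphi \, d\mx\, dt \\&\quad= \sum_{k=0}^{n-1}\int [V(T(\tau)^{k+1}u_0)-V(T(\tau)^k u_0)]\,\varphi_{k+1}\, d\mx + R_n,
\end{align*}
with $R_n\to 0$ by time equicontinuity of $S_n$. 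Applying Proposition \ref{prop_T} to each summand with input $T(\tau)^k u_0\in [a,b]$ bounds the right-hand side by
\begin{align*}
\sum_{k=0}^{n-1}\Big[&\int_{t_k}^{t_{k+1}}\!\!\int B_V(t',\mx,T(\tau)^k u_0)\cdot\nabla\varphi_{k+1}\, d\mx\, dt' \\&+\int_{t_k}^{t_{k+1}}\!\!\int\!\int_a^{T(\tau)^k u_0}\!\!\Div_{\mx} f(t',\mx,\lambda)V''(\lambda)\, d\lambda\, d\mx\, dt'\Big] + n\cdot o(\tau).
\end{align*}
Since the $o(\tau)$ in Proposition \ref{prop_T} depends only on the preserved $L^\infty$-bound of the input, $n\cdot o(\tau)=o(1)$. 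Passing $n\to\infty$ along the subsequence, strong $L^1$-convergence $T(\tau)^k u_0\to u(t,\cdot)$ for $t_k\to t$, together with smoothness of $\varphi$, $V'$, $f$, identifies the discrete sums with the corresponding continuous integrals in \eqref{a1}.

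The main technical obstacle will be this last passage to the limit: one needs $T(\tau)^k u_0\to u(t,\cdot)$ in $L^1_{loc}$ whenever $t_k\to t$, which follows from subsequential convergence of $S_n$ in $C([0,T];L^1_{loc})$ together with the one-step bound $\|T(\tau)^{k+1}u_0-T(\tau)^k u_0\|_{L^1}=O(\tau)$ from item (e); Lipschitz dependence of $B_V$ on $u$ (via $V'$ and $f'_\lambda$ Lipschitz on $[a,b]$) then lets us replace $T(\tau)^k u_0$ by $u(t',\cdot)$ in the Riemann sums with vanishing error. Once \eqref{a1} holds for the limit, Kruzhkov uniqueness identifies $u$ as the unique entropy admissible solution, so the whole sequence $S_n(t)u_0$ converges to $u(t,\cdot)$ in $L^1(\R^d)$ and not merely a subsequence.
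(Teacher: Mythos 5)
Your proposal follows essentially the same route as the paper: uniform BV bounds obtained by iterating Proposition \ref{basic_prop}(d), together with the time-Lipschitz estimate (e), give precompactness in $C([0,T];L^1)$ via Kolmogorov's criterion and a diagonal argument; a telescoping sum combined with the one-step estimate of Proposition \ref{prop_T} yields the entropy inequality for the limit; and uniqueness of the entropy solution upgrades subsequential to full convergence. Your write-up is in fact slightly more careful than the paper's on two points it glosses over --- the time-dependence of the test function (which you handle by Abel summation with $\varphi_{k+1}$) and the need for the $o(\tau)$ error in Proposition \ref{prop_T} to be uniform in the input so that $n\cdot o(\tau)=o(1)$ --- but these are refinements of the same argument, not a different one.
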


\begin{proof}
First, fix an arbitrary $t>0$. Consider the sequence of functions
$u_n(t,\cdot)=S_n(t) u$. We aim to prove that the sequence
$(u_n(t,\cdot))$ is strongly precompact in $L^1(\R^d)$. To this end,
we shall use the Kolmogorov criterion stating  that a functional
sequence bounded in $L^1(\R^d)$ is strongly precompact in
$L^1(\R^d)$ if it is uniformly $L^1(\R^d)$ continuous. In other
words, we need to prove that
\begin{itemize}

\item [a)] $\|u_n(t,\cdot)\|_{L^1(\R^d)} \leq C$ for every $n\in \N$ and some constant $C$;

\item [b)] for any relatively compact $K\subset\subset \R^d$, any $\eps>0$,
there exists $\Delta x>0$ such that $\|u_n(t,\mx+\Delta \mx)-
u_n(t,\mx)\|_{L^1(\R^d)} \leq \eps$.

\end{itemize}

Item a) follows from Proposition \ref{basic_prop}, item c) (we take $v=0$ there).

As for the item b), we shall use (recursively) property d) from Proposition
\ref{basic_prop}. Taking into account definition of the total
variation and form of the sequence $(u_n(t,\cdot))$, simple
calculations show that (with the notations from Proposition
\ref{basic_prop})
\begin{align*}
&TV(u_n)\leq (1+C_1 \frac{t}{n})TV(T(\frac{t}{n})^{n-1}(u_0))+C_2\frac{t}{n}\leq \dots\\ &\leq \left( 1 + C_1\frac{t}{n}\right)^nTV(u)+\frac{C_2 t}{n} \sum\limits_{k=0}^n \left(1+C_1\frac{t}{n}\right)^k \leq exp(C_1 t)TV(u_0)+2C_2(t+t^2).
\end{align*}  Since $u_0\in BV(\R^d)$, this immediately implies
$L^1$-equicontinuity of the sequence $(u_n(t,\cdot))$. This means
that for every fixed $t>0$, we can choose a strongly converging
subsequence (not relabelled) $(u_n(t,\cdot))$ of the sequence
$(u_n(t,\cdot))$. By taking a dense countable subset $E\subset \R^+$,
we can choose the same converging subsequence $(u_n(t,\cdot))$ for
every $t\in E$.

Now, by the continuity property given in item e) from Proposition
\ref{basic_prop}, we conclude that the subsequence $(u_n(t,\cdot))$
strongly converges in $C([0,T];L^1(\R^d)$ for every $T\in \R^+$
toward a function $u\in C([0,T];L^1(\R^d)$.

Now, we need to check that $u$ satisfies the entropy admissibility
conditions. First, notice that for every $t$, as $n\to \infty$, it
holds that $\alpha \to 0$. Thus, it is enough to notice that the main
part of the transport-collapse operator given by $T(\frac{t}{n})^ku
\to u$ as $n\to \infty$ along the previously chosen subsequence and
to consider

\begin{align}
\label{***}
\int_{\R^d}(V(T(\frac{t}{n})^ku)&-V(u)) \varphi(\mx) d\mx
=\sum\limits_{j=0}^{k-1}\int_{\R^d}(V(T(\frac{t}{n})^{j+1} u)-V(T(\frac{1}{n})^{j} u))\varphi(\mx) d\mx\\
&\overset{\eqref{1220}}{\leq} \sum\limits_{j=0}^{k-1}
\int_{jt/n}^{(j+1)t/n} \int_{\R^d} B_V(t',\mx,T(\frac{t}{n})^j
u(\mx)) \nabla \varphi d\mx dt'+{\cal O}(t/n).
\nonumber
\end{align} Now, we simply let $n\to \infty$ and keep in mind arbitrariness of
$t$ to infer that the function $u$ satisfies the entropy
admissibility conditions from Definition \ref{def1}, a).

Remark also that this implies convergence of the entire sequence
given by \eqref{TCoper} due to uniqueness of entropy solutions to
\eqref{cl1}, \eqref{ic1}. \end{proof}

\section{Boundary value problem}

In this section, we shall consider boundary value problem for homogeneous scalar conservation law \eqref{cl1-hom} on the domain $\Omega$,
which is a bounded simply connected open smooth subset of $\R^d$. 


In order to simplify the presentation, we shall assume that $a=0$ in \eqref{bndass}, i.e. that the solution to the considered problem is non-negative. In particular, this implies that the kinetic function $\chi$ corresponding to such a solution satisfies
\begin{equation}
\label{ass-cns}
\chi(\lambda,u)={\rm sgn}_+(u-\lambda) \geq 0.
\end{equation}

First, notice that the kinetic formulation from Theorem 2 still
holds in the interior of $\R^+\times\Omega$. However, we cannot use the method of
characteristics from the previous section directly since the characteristics entering the
boundary determine the value at the boundary. Nevertheless, since we are
re-iterating the procedure after a short period of time (see
\eqref{TCoper}), we can modify the transport collapse scheme so that we take into account the boundary data.

Accordingly, recall that the kinetic reformulation for \eqref{cl1-hom} has the form:
\begin{equation}
\label{hom-kinetic}
\pa_t \chi(\lambda,u)+f'(\lambda) \Div_{\mx} \chi(\lambda,u)=\pa_\lambda m_+(t,\mx,\lambda)
\end{equation} where $m_+$ is a non-negative measure. Assume that $\Omega$ is an open set such that for some $\sigma\in (0,1)$, no two outer normals from $\pa \Omega$ do not intersect in the set 
\begin{align*}
&\Omega_{\sigma}=\{\mx \in \R^d: \ \ dist(\mx,\Omega)<\sigma \} \ \ \text{i.e. in the set}\\
&\Omega^{\sigma}=\Omega_{\sigma} \backslash \Omega
\end{align*} (i.e. we assume that $\Omega$ has finite curvature). In order to augment \eqref{hom-kinetic} (with neglected right-hand side) with appropriate initial data, denote by $\vec{\nu}(\mx)$, $\mx \in \Omega_\sigma\setminus \Omega$ the unit outer normal on $\pa\Omega$ passing trough the point $\mx$.  We then extend the boundary data $u_B(t,\mx)$ for every fixed $t\geq 0$ along the normals $\vec{\nu}(\mx)$ in the set $\Omega_{\sigma}$. 

More precisely, we set for $\mx\in \Omega^\sigma=\Omega_{\sigma} \backslash \Omega$ (slightly abusing the notation)
\begin{equation}
\label{sigma}
u_B(t,\mx)=u_B(t,\mx_0), \ \ \text{for $\mx_0\in \pa \Omega$ such that  } \vec{\nu}(\mx_0)=\vec{\nu}(\mx).
\end{equation} Finally, introduce the function
\begin{equation}
\label{w}
w_{u(t,\cdot)}(\mx)=\begin{cases}
0, & \mx\notin \Omega_\sigma\\
u(t,\mx), & \mx\in \Omega\\
u_B(t,\mx), & \mx\in \Omega_\sigma\setminus \Omega=\Omega^\sigma,
\end{cases}
\end{equation}which is actually the extension of $u$ along the normals $\vec{\nu}$. If the function $u$ does not depend on $t$, then we put $u(\mx)$ instead of $u(t,\mx)$, and $u_B(0,\mx)$ instead of $u_B(t,\mx)$ on the right-hand side of \eqref{w}. Remark that we can rewrite the function $w_{u(t,\cdot)}(\mx)$ in the form
$$
w_{u(t,\cdot)}(\mx)=u(t,\mx) \kappa_{\Omega}(\mx)+u_B(t,\mx)\kappa_{\Omega^\sigma}(\mx),
$$where $\kappa_A$ is the characteristic function of the set $A$.

Now, we are ready to introduce a modification of the transport collapse scheme from the previous section. Fix $t>0$ and $n\in \N$. We neglect the right-hand side of \eqref{hom-kinetic} and, on the first step, we augment it with $\chi(\lambda,w_{u_0}(\mx))$ as the initial data.

\begin{align} 
\label{hom-TC}
\pa_t h+f'(\lambda) \Div_{\mx} h&=0, \\ 
h|_{t=0}&=\chi(\lambda,w_{u_0}(\mx)).
\label{id-hom}
\end{align} The solution to \eqref{hom-TC} is given by $h(t,\mx,\lambda)=\chi(\lambda, \omega_{u_0}(\mx-f'(\lambda) t))$ (since the characteristics of the equation have quite simple form; see \cite{brenier}). We construct the approximate solution $u_n$ to \eqref{cl1-hom}, \eqref{ic1}, \eqref{bc1} by the following procedure:

\begin{itemize}

\item \begin{equation}
\label{aver}
u_n(t',\mx)=T(t'/n)(w_{u_0}(\mx)):=\int_0^b \chi(\lambda, \omega_{u_0}(\mx-f'(\lambda) t')) d\lambda, \ \ t'\in (0,t/n].
\end{equation}

\item  For $k=1,\dots,n-1$, we take
\begin{equation}
\label{aver1}
u_n(kt/n+t',\mx)=\int_0^b \chi(\lambda, \omega_{u_n(kt/n,\cdot)}(\mx-f'(\lambda) t')) d\lambda, \ \ t'\in (0,t/n].
\end{equation}

\end{itemize} Remark that here, we have actually applied the transport collapse operator. 
Roughly speaking, the approximate solution in $[0,t]\times \Omega$ is given by the transport-collapse operator, while in $[0,t]\times \Omega^C$ the sequence $(u_n)$ is equal to the boundary data extended along the normals on $\pa \Omega$.

We shall show that the sequence $(u_n)$ strongly converges in $L^1([0,t]\times \Omega)$ along a subsequence toward a function $u$ which represents the solution to \eqref{cl1-hom}, \eqref{ic1}, \eqref{bc1} in the sense of Definition \ref{def2}. In order to prove the later fact, we shall use the kinetic formulation similar to \cite{pan_tams}.  We introduce the following definition.

\begin{definition}
\label{def-kin}
We say that the non-negative function $p_+ \in L^\infty(\R^+\times \Omega \times \R)$ is the kinetic super-solution to \eqref{cl1-hom}, \eqref{ic1}, \eqref{bc1} if it satisfies the following equation for any $\varphi \in C^1_c(\R_+^d)$ and $\rho \in C_c^1(\R)$ (recall that we assumed $a=0$ in \eqref{bndass}):

\begin{align}
\label{kin+}
&\int_{\R^d_+\times \R} \rho(k)p_+(t,\mx,k) \left( \pa_t \varphi +  f'(k) \, \nabla_{\mx} \varphi \right) d\mx dt dk \\&- \int_{\R}\int_0^b \int\limits_{\substack{ \R^+\times \pa \Omega \\ \langle f'(\lambda),\vec{\nu}(\mx) \rangle < 0}} \!\!\!\!\rho(k) \varphi\,{\rm sgn}_+(\lambda-k) \langle f'(\lambda),\vec{\nu}(\mx) \rangle \chi(\lambda, u_B(t,\mx)) d\gamma(\mx) dt d\lambda dk \nonumber\\&=- \int_{\R}\int_{\R^+\times \Omega} \varphi(t,\mx) \rho'(k) dm^+(t,\mx,k),
\nonumber
\end{align}for a non-negative measure $m^+$.

We say that the non-positive function $p_- \in L^\infty(\R^+\times \Omega \times \R)$ is the kinetic sub-solution to \eqref{cl1-hom}, \eqref{ic1}, \eqref{bc1} if it satisfies the following equation for any $\varphi \in C^1_c(\R_+^d)$ and $\rho \in C_c^1(\R)$:

\begin{align}
\label{kin-}
&\int_{\R^d_+\times \R} \rho(k)p_-(t,\mx,k) \left( \pa_t \varphi +  f'(k) \, \nabla_{\mx} \varphi \right) d\mx dt dk \\&- \int_{\R}\int_{-b}^0 \int\limits_{\substack{ \R^+\times \pa \Omega \\ \langle f'(\lambda),\vec{\nu}(\mx) \rangle < 0}} \!\!\!\!\rho(k) \varphi\,{\rm sgn}_+(\lambda-k+b) \langle f'(\lambda+b),\vec{\nu}(\mx) \rangle \times \nonumber\\&\qquad\qquad\qquad\qquad\qquad\qquad\qquad \qquad \times \chi(\lambda, u_B(t,\mx)-b) d\gamma(\mx) dt d\lambda dk \nonumber\\&= -\int_{\R}\int_{\R^+\times \Omega} \varphi(t,\mx) \rho'(k) dm^-(t,\mx,k),
\nonumber
\end{align}for a non-negative measure $m^-$.

The function $p\in L^\infty(\R^+\times \Omega \times \R)$ is the kinetic solution if it is kinetic super-solution and $(1-p)$ is kinetic sub-solution.

\end{definition}

The following theorem holds.

\begin{theorem}
\label{thm-bvp}
There exists the kinetic solution to  \eqref{cl1-hom}, \eqref{ic1}, \eqref{bc1} in the sense of Definition \ref{def-kin}.
\end{theorem}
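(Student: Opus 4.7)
The strategy is to construct the kinetic solution as the limit of the transport-collapse approximations $u_n$ defined by \eqref{aver}--\eqref{aver1}, in the spirit of Theorem \ref{mainthm}, but now carefully tracking the contribution of the boundary data that was absorbed into the auxiliary function $w_{u_n(kt/n,\cdot)}$ on the collar $\Omega^\sigma$. The proof splits naturally into compactness, derivation of a discrete kinetic inequality at each time step, and the passage to the limit (first $n\to\infty$, then $\sigma\to 0$).

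First, I would establish a priori bounds on $(u_n)$. Since each step is a transport-collapse operator $T(t/n)$ acting on the extended data $w_{u_n(kt/n,\cdot)}$, the maximum principle (Proposition \ref{basic_prop} a)) and the assumption $0\le u_0,u_B\le b$ give $0\le u_n\le b$. The BV estimate from Proposition \ref{basic_prop} d), iterated as in the proof of Theorem \ref{mainthm}, provides a uniform bound $TV(u_n(t,\cdot))\le C(t)(TV(u_0)+TV(u_B))$ after accounting for the BV jumps created along $\partial\Omega$ by the extension \eqref{sigma}--\eqref{w}; here one uses smoothness of $\partial\Omega$ and the absence of focal points of normals inside $\Omega_\sigma$. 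Time-equicontinuity in $L^1$ follows by iterating item e) of Proposition \ref{basic_prop}. Kolmogorov's criterion then yields a subsequence $u_n\to u$ strongly in $C([0,T];L^1_{loc}(\Omega))$.

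Next, I would derive a discrete kinetic inequality analogous to Proposition \ref{prop_T} on each subinterval $[kt/n,(k+1)t/n]$. Writing $\rho(k)\,{\rm sgn}_+(u-k)$-type test pairs, the key computation is that
\begin{equation*}
\int_\Omega \big(V(T(t/n)w_{u_n(kt/n,\cdot)})-V(u_n(kt/n,\cdot))\big)\varphi\,d\mx \le \int_{kt/n}^{(k+1)t/n}\!\!\int_{\R^d} B_V(\mx,w_{u_n}) \cdot \nabla\varphi\,d\mx\,dt' + o(t/n),
\end{equation*}
where the integration is over all of $\R^d$ precisely because $w_{u_n}$ carries the boundary data on $\Omega^\sigma$. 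Splitting the right-hand side into the interior contribution (over $\Omega$) and the collar contribution (over $\Omega^\sigma$), integrating by parts on $\Omega^\sigma$, and using that $w_{u_n}$ is constant along normals, the collar contribution reduces (up to $o(1)$ as $\sigma\to 0$) to a surface integral on $\partial\Omega$ with the natural weight $\langle f'_\lambda,\vec{\nu}\rangle$. For the $|u-k|_+$ entropy this produces exactly the boundary term appearing in \eqref{semi+b} and \eqref{kin+}, and the restriction to the inflow part $\{\langle f'(\lambda),\vec{\nu}\rangle<0\}$ comes out automatically: on the outflow part the characteristics carry interior data outward, so the divergence identity cancels the boundary contribution to leading order.

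The passage to the limit then reassembles these one-step inequalities into the distributional formulation \eqref{kin+} for $p_+=\chi(\cdot,u)$ (respectively \eqref{kin-} for $p_-$), with a non-negative Radon measure $m^\pm$ arising as the weak-$\star$ limit of the (uniformly bounded) entropy dissipation defects in \eqref{1220}; the uniform bound on $m_n^\pm$ follows by testing with a cut-off $\varphi\equiv 1$ on a large compact set and using the $L^\infty$ bound on $u_n$. The main obstacle I foresee is the rigorous justification of the collar-to-boundary reduction: one needs a tubular neighbourhood parametrisation $\mx=\my+s\vec{\nu}(\my)$ for $\my\in\partial\Omega$, $s\in(0,\sigma)$, with Jacobian $1+O(\sigma)$, together with a uniform-in-$n$ estimate controlling the contribution of characteristics whose foot points lie outside $\Omega_\sigma$, so that genuine inflow information is captured cleanly while spurious terms vanish. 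Once this geometric estimate is in place, the sign conventions in Definition \ref{def-kin} match exactly those produced by the change of variables $\lambda\mapsto\lambda$ (with the shift by $b$ for the sub-solution), and the existence claim follows.
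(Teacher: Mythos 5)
Your overall architecture (iterate the one-step transport-collapse inequality, extract a boundary term from the collar $\Omega^\sigma$, pass to the limit with a defect measure) matches the paper's, but your compactness step contains a genuine gap, and it is precisely the step the paper is structured to avoid. You claim a uniform bound $TV(u_n(t,\cdot))\le C(t)(TV(u_0)+TV(u_B))$ by iterating Proposition \ref{basic_prop}~d) ``after accounting for the BV jumps created along $\partial\Omega$ by the extension.'' But that accounting is the whole difficulty: at \emph{each} of the $n$ sub-steps the operator acts on $w_{u_n(kt/n,\cdot)}$, whose total variation includes the jump across $\partial\Omega$ between the interior values of $u_n$ and the extended datum $u_B$. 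This jump is $O(1)$ (up to $b\,\mathcal{H}^{d-1}(\partial\Omega)$) and is re-created at every step — e.g.\ at an outflow portion of the boundary the interior trace has no reason to agree with $u_B$ — so the naive recursion gives $TV(u_n(t,\cdot))\lesssim n$, not a uniform bound. Without uniform BV you do not get strong $L^1$ compactness of $(u_n)$, and hence you cannot set $p_+=\chi(\cdot,u)$ for a strong limit $u$ as you do in your final paragraph. The paper sidesteps this entirely: it never proves strong compactness of $(u_n)$ inside the proof of Theorem \ref{thm-bvp}. It only takes weak-$\star$ limits of the uniformly bounded sequences $({\rm sgn}_\pm(u_n-k))$ and of the defect measures $(m^n_\pm)$, obtains a kinetic super/sub-solution pair $p_\pm$ in the sense of Definition \ref{def-kin}, and only afterwards (in the Corollary, via Panov's rigidity result \cite[Corollary 4.2]{pan_tams}) identifies $p_+={\rm sgn}_+(u-k)$ for an actual function $u$. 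Your proof needs either that same weak-limit route or an honest uniform BV estimate near the boundary, which you have not supplied.

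Two smaller points. First, your assertion that on the outflow part ``the divergence identity cancels the boundary contribution to leading order'' is not what happens: in the paper's computation (relation \eqref{hom4}) the outflow term $-\int V_+'(\lambda)\chi(\lambda,v)\varphi$ over $\Omega\backslash(\Omega-f'(\lambda)t)$ is \emph{discarded} because it has a definite sign ($V_+'\ge 0$, $\chi\ge 0$, $\varphi\ge 0$); it does not cancel, and its omission is part of what turns the identity into the inequality defining the defect measure. Second, you introduce an additional limit $\sigma\to 0$ and an integration by parts on the collar; the paper keeps $\sigma$ fixed and instead performs the exact change of variables $\my=\mx-f'(\lambda)t$, reading off the boundary term from the measure of the symmetric difference $(\Omega-f'(\lambda)t)\,\triangle\,\Omega$ as $t\to 0$. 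Your collar argument could be made to work, but it is an extra layer of approximation that the homogeneous (straight-characteristic) setting does not require.
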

\begin{proof}
We shall prove that the sequences of functions $({\rm sgn}_{\pm}(u_n-k))$ (weakly) converge toward the kinetic super and sub solutions for the sequence $(u_n)$ defined by \eqref{aver} and \eqref{aver1}. Since ${\rm sgn}_{+}(u_n-k)=1-{\rm sgn}_{-}(u_n-k)$ the kinetic super-solution will be the kinetic solution at the same time. 

First, remark that for every $n\in \N$, value of the function $u_n(t,\mx)$ for $\mx\in \Omega$ is given by the transport collapse operator.

Therefore, it is enough to consider behaviour of $V(T(t)v)-V(v)$ for a convex function $V$ whose special form will be chosen later, and for the function $v\geq 0$ playing the role of $u_n(t_s,\cdot)$ (recall that we have assumed that $a=0$ implying that our sequence of approximate solutions is non-negative) such that $v(t,\mx)=u_B(t,\mx)$, $\mx \in \Omega_\sigma\backslash \Omega$. Accordingly, denote by $\vec{\nu}_{\Omega-f'(\lambda)t}(\mx) $ the unit vector to $\pa (\Omega-f'(\lambda)t)$, assume that we fixed $t<\sigma$ for $\sigma$ given in \eqref{sigma}, and consider for $\varphi \in C^2_c(\Omega_\sigma)$:

\begin{align}
\label{hom4}
&\int_{\Omega} \left(V(T(t)v)(\mx)-V(v)(\mx) \right) \varphi(\mx) d\mx \\ \nonumber &\overset{\eqref{****}}{\leq} \int_{\Omega} \int_{0}^b V'(\lambda) \left(\chi(\lambda,v(\mx-f'(\lambda)t))-\chi(\lambda,v(\mx)) \right) \varphi(\mx) d\lambda d\mx= \Big(\my=\mx-f'(\lambda)t \Big)
\\& =\int_{0}^b \int_{\Omega-f'(\lambda)t}  V'(\lambda) \chi(\lambda,v(\my)) \varphi(\my+f'(\lambda) t)  d\my d\lambda-\int_{0}^b \int_{\Omega}  V'(\lambda) \chi(\lambda,v(\mx)) \varphi(\mx)  d\mx d\lambda \nonumber \\
&=\int_0^b \int\limits_{\substack{(\Omega-f'(\lambda)t)\backslash \Omega\\\langle f'(\lambda), \vec{\nu}_{\Omega-f'(\lambda)t}(\mx)\rangle< 0}} V'(\lambda) \chi(\lambda,u_B(0,\mx)) \varphi(\mx+t f'(\lambda))  d\mx d\lambda \nonumber\\
&-\int_0^b \int\limits_{\substack{\Omega\backslash (\Omega-f'(\lambda)t)\\\langle f'(\lambda), \vec{\nu}_{\Omega-f'(\lambda)t}(\mx)\rangle \geq 0}} V'(\lambda) \chi(\lambda,v(\mx)) \varphi(\mx)  d\mx d\lambda \nonumber\\
&+\int_0^b \int_{(\Omega-f'(\lambda)t)\cap \Omega} V'(\lambda) \chi(\lambda,v(\mx)) (\varphi(\mx+t f'(\lambda))-\varphi(\mx))  d\mx d\lambda.
\nonumber
\end{align} Now, since $v\geq 0$ it will also be $\chi(\lambda,v) \geq 0$ (see \eqref{ass-cns}. Next, for a fixed $k\in \R$, choose $V(\lambda)=V_+(\lambda)=|\lambda-k|_+$ in \eqref{hom4}. 
After expanding the function $\varphi$ in the Taylor expansion around $\mx$ and taking into account that as $t\to 0$:
$$ 
\frac{1}{t}\int\limits_{\substack{(\Omega-f'(\lambda)t)\backslash \Omega\\\langle f'(\lambda), \vec{\nu}(\mx)\rangle < 0}} g(\mx) d\mx\to -\!\!\!\!\!\!\!\!\!\!\int\limits_{\substack{\pa \Omega \\ \langle f'(\lambda), \vec{\nu}(\mx)\rangle < 0}} g(\mx)  \langle f'(\lambda), \vec{\nu}(\mx)\rangle d\gamma(\mx) \geq 0, 
$$ we get (keep in mind that $V_+' \geq 0$)

\begin{align}
\label{hom5}
&\int_{\Omega} \left(V_+(T(t)v)(\mx)-V_+(v)(\mx) \right) \varphi(\mx) d\mx 
\\
&\qquad\qquad\leq -t\int_0^b \!\!\! \int\limits_{\substack{\pa \Omega \\ \langle f'(\lambda), \vec{\nu}(\mx)\rangle  < 0}} V_+'(\lambda)\langle f'(\lambda), \vec{\nu}(\mx)\rangle \chi(\lambda,u_B(t,\mx)) \varphi(\mx)  d\mx d\lambda \nonumber\\&\qquad\qquad+t  \int_{\Omega}\int_0^b f'(\lambda) V_+'(\lambda) \chi(\lambda,v(t)) \varphi(\mx)  d\mx d\lambda+o(t).
\nonumber
\end{align}  Since for every $n$, the function $u_n(t,\cdot)$ has the same properties as the function $v$ from the above, we see that $u_n(t,\cdot)$ satisfies \eqref{hom5}. Therefore, as in the proof of Theorem \ref{mainthm} (more precisely relation \eqref{***}), we conclude that $(u_n)$ satisfies

\begin{align}
\label{semi+b-n}
&\int_{\R^d_+}\left(|u_n-k|_+ \pa_t \varphi+{\rm sgn}_+(u_n-k)(f(u_n)-f(k)) \, \nabla_{\mx} \varphi \right)d\mx dt\\&+ \int_{\R^d}|u_0-k|_+\varphi(0,\cdot) d\mx \nonumber \\& - \int_{0}^{b} \!\!\!\!\int\limits_{\substack{ \R^+\times \pa \Omega \\ \langle f'(\lambda),\vec{\nu}(\mx) \rangle < 0}} \!\!\!\!\varphi\,|\lambda-k|_+ \langle f'(\lambda),\vec{\nu}(\mx) \rangle \chi(\lambda, u_B(t,\mx)) d\gamma(\mx) dt d\lambda +{\cal O}(\frac{t}{n})\geq 0. \nonumber
\end{align} The left-hand side of the previous expression defines a non-positive distribution in $(t,\mx,k)\in \R^d_+\times \R$ and thus, it is a non-positive measure. We denote it by $m^n_+(t,\mx,k)$. Having this in mind, we get after differentiating \eqref{semi+b-n} with respect to $k$:

\begin{align}
\label{kin+b-n}
&\int_{\R^d_+}{\rm sgn}_+(u_n-k)_+ \left(\pa_t \varphi+f'(k) \, \nabla_{\mx} \varphi \right)d\mx dt+ \int_{\R^d}{\rm sgn}_+(u_0-k) \varphi(0,\cdot) d\mx 
\\& - \int_{0}^{b} \!\!\!\!\int\limits_{\substack{ \R^+\times \pa \Omega \\ \langle f'(\lambda),\vec{\nu}(\mx) \rangle < 0}} \!\!\!\!\varphi\,{\rm sgn}_+(\lambda-k) \langle f'(\lambda),\vec{\nu}(\mx) \rangle \chi(\lambda, u_B(t,\mx)) d\gamma(\mx) dt d\lambda \nonumber\\& +{\cal O}(\frac{t}{n})= \pa_k m^n_+(t,\mx,k), \nonumber
\end{align}where the above relation is understood in the sense of distributions in $k\in \R$. Letting $n\to \infty$ in \eqref{kin+b-n} along a subsequence for which both $({\rm sgn}_+(u_n-k))$ and $(m^n_+)$ weakly converge, we reach to \eqref{kin+}.

In order to get wanted relation for $V(\lambda)=V_-(\lambda)=|\lambda-k|_-$, remark that the function 
$$
-b\leq w=u-b \leq 0
$$ represents the weak solution to 
$$
\pa_t w + \Div_\mx f(w+b)=0,
$$ with the initial and boundary data
$$
-b \leq w_0=u_0-b \leq 0, \ \ -b \leq w_B=u_B-b\leq 0.
$$ If we apply the the transport-collapse procedure described in this section, then the corresponding sequence of approximate solutions has the form $(w_n)=(u_n-b)$ for $(u_n)$ defined in \eqref{aver} and \eqref{aver1}. We can thus repeat the arguments from \eqref{hom4} to conclude (keep in mind that now $\chi(\lambda,v(\mx))\leq 0$)

\begin{align}
\label{hom6}
&\int_{\Omega} \left(V_-(T(t)(v))(\mx)-V_-(v)(\mx) \right) \varphi(\mx) d\mx
\\& \leq \int_{-b}^0 \int\limits_{\substack{(\Omega-f'(\lambda+b)t)\backslash \Omega\\\langle f'(\lambda+b), \vec{\nu}_{\Omega-f'(\lambda+b)t}(\mx)\rangle< 0}} V_-'(\lambda) \chi(\lambda,u_B(0,\mx)-b) \varphi(\mx+t f'(\lambda+b))  d\mx d\lambda \nonumber\\
&-\int_{-b}^0 \int\limits_{\substack{\Omega\backslash (\Omega-f'(\lambda+b)t)\\\langle f'(\lambda+b), \vec{\nu}_{\Omega-f'(\lambda+b)t}(\mx)\rangle \geq 0}} V_-'(\lambda) \chi(\lambda,v(\mx)) \varphi(\mx)  d\mx d\lambda \nonumber\\
&+\int_{-b}^0 \int_{(\Omega-f'(\lambda+b)t)\cap \Omega} V_-'(\lambda) \chi(\lambda,v(\mx)) (\varphi(\mx+t f'(\lambda+b))-\varphi(\mx))  d\mx d\lambda.
\nonumber
\end{align} From here, as for $V_+$, we obtain \eqref{kin-}. \end{proof}

Denote by $p+$ and $p-$ the weak limits along appropriate subsequence of the sequences $({\rm sgn}_+(u_n-k))$ and $({\rm sgn}_{-}(u_n-k))$, respectively, defined in the proof of the previous theorem. It is not difficult to see that they satisfy conditions from \cite[Definition 3.1]{pan_tams}. This follows from the following theorem providing relation between Definition \ref{def2} and Definition \ref{def-panov} (from which the kinetic formulation given in \cite[Definition 3.1]{pan_tams} is derived).

\begin{theorem}
\label{eq}
A function $u$ satisfying Definition \ref{def2} satisfies Definition \ref{def-panov}.
\end{theorem}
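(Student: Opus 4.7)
The interior and initial--data contributions on the left--hand sides of \eqref{semi+b} and \eqref{semi+} are literally the same, and similarly for \eqref{semi-b} and \eqref{semi-}. Therefore the theorem reduces to producing a single constant $L$ such that each boundary term appearing in Definition \ref{def2} is dominated by the corresponding boundary term appearing in Definition \ref{def-panov} multiplied by $L$. Once such a bound is established, the inequality in Definition \ref{def-panov} follows directly from the stronger inequality in Definition \ref{def2}, since we would be adding a larger non--negative quantity to the same base.

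I would begin with the $+$ version. Denote by
$$
B_+ = -\int_0^{b-a}\int\limits_{\substack{\R^+\times\pa\Omega\\\langle f'(\lambda-a),\vec{\nu}\rangle<0}} \varphi\,|\lambda-k-a|_+\,\langle f'(\lambda-a),\vec{\nu}(\mx)\rangle\,\chi(\lambda,u_B(t,\mx)-a)\,d\gamma(\mx)\,dt\,d\lambda
$$
the boundary term in \eqref{semi+b}, and observe that $B_+\geq 0$: on the region of integration all of $\varphi$, $|\lambda-k-a|_+$ and $\chi(\lambda,u_B-a)=\mathbf{1}_{[0,u_B-a]}(\lambda)$ are non--negative (the last because $u_B\geq a$), while $\langle f'(\lambda-a),\vec{\nu}\rangle<0$ and the outer minus sign flip the total sign. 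Bounding $|\langle f'(\lambda-a),\vec{\nu}\rangle|\leq M:=\|f'\|_{L^\infty([a,b])}$ and eliminating $\chi$ by restricting the $\lambda$--integration to $[0,u_B-a]$, we arrive at
$$
B_+ \leq M\int_{\R^+\times\pa\Omega}\varphi\,\Bigl(\int_0^{u_B-a}(\lambda-k-a)_+\,d\lambda\Bigr)d\gamma(\mx)\,dt.
$$

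The heart of the matter is then the elementary pointwise inequality
$$
\int_0^{u_B-a}(\lambda-k-a)_+\,d\lambda \;\leq\; (b-a)\,(u_B-k)_+ \qquad\text{for every }k\in\R,
$$
which I would verify by a three--case analysis according to the position of $k+a$ with respect to the interval $[0,u_B-a]$: the integral vanishes when $k+a\geq u_B-a$; when $0\leq k+a<u_B-a$ it equals $(u_B-k-2a)^2/2$, bounded via $0\leq u_B-k-2a\leq\min\{u_B-k,\,b-a\}$; and when $k+a<0$ it equals $(u_B-a)(u_B-2k-3a)/2$, where the monotonicity argument (writing $t=-k>a$) shows the ratio $(u_B-2k-3a)/(u_B-k)$ is increasing in $t$ with supremum $2$, so the whole expression is dominated by $(u_B-a)(u_B-k)\leq(b-a)(u_B-k)$. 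Combining, $B_+\leq L\int_{\R^+\times\pa\Omega}\varphi\,|u_B-k|_+\,d\gamma(\mx)\,dt$ with $L=M(b-a)$, and adding this to the common interior/initial part gives \eqref{semi+}.

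For the $-$ version I would apply the exact same strategy to $w=u-b\leq 0$, noting that $\chi(\lambda,u_B-b)\leq 0$ for $\lambda\in[a-b,0]$, that the corresponding boundary term in \eqref{semi-b} is still non--negative after the outer minus sign, and that the analogous one--dimensional estimate
$$
\int_{a-b}^{0}|\lambda-k+b|_-\,\mathbf{1}_{[u_B-b,0]}(\lambda)\,d\lambda \;\leq\;(b-a)\,|u_B-k|_-
$$
is proved by the same three--case analysis. Taking $L$ to be the same constant $M(b-a)$ (enlarging if necessary to accommodate both signs) finishes the proof. The only real obstacle is the bookkeeping in the one--dimensional case analysis, which is entirely elementary but must be carried out carefully to handle the sign of $k+a$ (respectively $k-b$) for arbitrary $k\in\R$.
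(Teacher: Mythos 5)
Your strategy is the same as the paper's: the interior and initial--data terms in \eqref{semi+b} and \eqref{semi+} coincide, so everything reduces to dominating the boundary term of Definition \ref{def2} by a constant multiple of $\int\varphi\,|u_B-k|_+\,d\gamma\,dt$, first using $|\langle f'(\lambda-a),\vec{\nu}\rangle|\le\|f'\|_\infty$ and then estimating the remaining one--dimensional $\lambda$--integral. At that last step you are in fact more careful than the paper, which asserts the exact identity $\int_0^{b-a}|\lambda-k-a|_+\,\chi(\lambda,u_B-a)\,d\lambda=|u_B-k|$; this is false in general (the left--hand side is the integral of a ramp function and is generically quadratic in $k$, and moreover the quantity one must land on for \eqref{semi+} is $|u_B-k|_+$, not $|u_B-k|$). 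Your inequality with the extra factor $b-a$ is exactly what is needed, since Definition \ref{def-panov} only requires the existence of \emph{some} constant $L$.

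One caveat: your key one--dimensional inequality $\int_0^{u_B-a}(\lambda-k-a)_+\,d\lambda\le(b-a)(u_B-k)_+$ silently uses $a\ge0$ (the middle case invokes $u_B-k-2a\le u_B-k$, and the monotonicity claim in the third case needs $u_B+3a\ge0$), and it genuinely fails for $a<0$: take $a=-1$, $b=1$, $u_B=0$, $k=1$; then the left--hand side equals $1/2$ while $(u_B-k)_+=0$. This is harmless in the paper's actual setting, where $a=0$ is assumed throughout Section 3, but since \eqref{bndass} and Definition \ref{def2} permit $a<0$ you should either record that normalization explicitly or observe that the obstruction comes entirely from the shift $|\lambda-k-a|_+$: with the arguably intended kernel $|\lambda-(k-a)|_+=|\lambda-k+a|_+$ one has the pointwise bound $(\lambda-k+a)_+\le(u_B-k)_+$ for all $0\le\lambda\le u_B-a$ by monotonicity of $(\cdot)_+$, which replaces your entire three--case analysis by one line and works for every $a\le b$.
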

\begin{proof}
It is enough to notice 
$$
|\langle f'(\lambda),\vec{\nu}(\mx)\rangle| \leq L,
$$ where $L$ is the constant such that $ \|f'\|_{\infty} \leq L$. Then, the last term in the left-hand side of \eqref{semi+b} satisfies for every $t\geq 0$ (the other terms there are the same as the corresponding ones from \eqref{semi+}):
\begin{align*}
&|\int_{0}^{b-a} \!\!\!\!\int\limits_{\substack{ \pa \Omega \\ \langle f'(\lambda-a),\vec{\nu}(\mx) \rangle < 0}} \!\!\!\!|\lambda-k-a|_+ \langle f'(\lambda-a),\vec{\nu}(\mx) \rangle \chi(\lambda, u_B(t,\mx)-a) d\gamma(\mx) d\lambda|
\\&\leq L \int_{\pa \Omega} \int_0^{b-a}|\lambda - k -a|_+ \, \chi(\lambda,u_B(t,\mx)-a)d\lambda d\gamma(\mx) 
\\&=L\int_{\pa \Omega}|u_B(t,\mx)-k|d\mx
\end{align*} from where we conclude that $u$ satisfies  \eqref{semi+}. The proof that $u$ satisfies \eqref{semi-} is the same.
\end{proof} Direct corollary of the previous theorem is existence and uniqueness of the function $u$ satisfying Definition \ref{def2}.

\begin{corollary}
There exists the function $u$ satisfying conditions of Definition \ref{def2} and it is  unique.
\end{corollary}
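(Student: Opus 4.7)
The plan is to deduce both assertions from material already assembled. For existence, I would revisit the transport-collapse sequence $(u_n)$ defined by \eqref{aver}-\eqref{aver1}. The uniform bound $a\le u_n\le b$ follows from the structure of the kinetic transport step together with the definition of $w_{u_n(t,\cdot)}$ via \eqref{w}, while a spatial BV estimate on each sub-step is obtained exactly as in Proposition \ref{basic_prop}(d) and iterated $n$ times (here the boundary extension of $u_B$ along normals contributes only a controlled BV amount because $u_B\in BV$); combining this with a time-continuity estimate analogous to Proposition \ref{basic_prop}(e) yields uniform $L^1$-equicontinuity in $(t,\mx)$ on compact subsets of $\R^+\times\Omega$. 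The Kolmogorov criterion then provides a subsequence converging strongly in $L^1_{loc}(\R^+\times\Omega)$ to some $u\in L^\infty(\R^+\times\Omega;[a,b])$.

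Next I would pass to the limit in the inequality \eqref{semi+b-n} derived in the course of proving Theorem \ref{thm-bvp}. The nonlinear interior terms $|u_n-k|_+$ and ${\rm sgn}_+(u_n-k)(f(u_n)-f(k))$ converge in $L^1_{loc}$ by the strong convergence of $(u_n)$ and continuity of $f$; the initial datum term is independent of $n$; the boundary integral is linear in the fixed datum $u_B$ and in the kinetic variable $\lambda$, hence does not depend on $n$ at all; and the ${\cal O}(t/n)$ error vanishes. This delivers \eqref{semi+b}. The companion inequality \eqref{semi-b} is obtained in the same fashion from the shifted sequence $w_n=u_n-b$, as already sketched at the end of the proof of Theorem \ref{thm-bvp}. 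The general case $a\neq 0$ is recovered by the translation $u\mapsto u-a$, which preserves the structure of \eqref{cl1-hom}-\eqref{bc1} and is precisely the shift built into Definition \ref{def2}.

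Uniqueness is immediate from Theorem \ref{eq}: every function satisfying Definition \ref{def2} is a weak entropy solution in the sense of Definition \ref{def-panov}, for which the $L^1$-contraction and uniqueness of admissible solutions are classical (see \cite{Otto, pan_tams}). Therefore any two functions fulfilling Definition \ref{def2} must agree a.e. in $\R^+\times\Omega$, and as a by-product of uniqueness the whole sequence $(u_n)$ converges, not merely a subsequence.

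The step I expect to be the main obstacle is the limit passage in the boundary term of \eqref{semi+b-n}, because strong convergence of the traces $u_n|_{\R^+\times\pa\Omega}$ is not available from the interior $L^1_{loc}$ convergence and would be delicate to establish directly. This difficulty is however circumvented by design: in Definition \ref{def2} the boundary contribution depends on $u_B$ and the kinetic variable $\lambda$ alone, and not on the trace of $u_n$, so the kinetic rewriting removes precisely the obstruction that would otherwise require a strong trace theorem. Once this point is recognised, the remaining steps are routine.
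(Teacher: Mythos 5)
There is a genuine gap in your existence argument, and it sits exactly where the paper takes a different (and deliberately chosen) route. You propose to obtain strong $L^1_{loc}$ compactness of $(u_n)$ by iterating the BV estimate of Proposition \ref{basic_prop}(d) over the $n$ sub-steps, asserting that the boundary extension "contributes only a controlled BV amount". The problem is that this controlled amount is incurred at \emph{every} sub-step: at each re-initialization the operator is applied to $w_{u_n(kt/n,\cdot)}$ from \eqref{w}, whose total variation over $\R^d$ equals $TV_{\Omega}(u_n(kt/n,\cdot))$ plus the variation of the extended datum on $\Omega^\sigma$ \emph{plus the jump across $\pa\Omega$ between the interior values of $u_n$ and $u_B$} (and a further jump across $\pa\Omega_\sigma$). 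These jump terms are $O(1)$ — bounded by $(b-a)\,\mathcal{H}^{d-1}(\pa\Omega)$ but not small, since nothing forces $u_n$ to attain the boundary datum — so iterating gives at best $TV(u_n(t,\cdot))\leq TV(u_0)+C\,n$, which is useless as $n\to\infty$. Without a uniform BV bound (or some substitute compactness mechanism), the passage to the limit in the nonlinear interior terms $|u_n-k|_+$ and ${\rm sgn}_+(u_n-k)(f(u_n)-f(k))$ of \eqref{semi+b-n} does not go through; your correct observation that the boundary and initial terms are harmless does not rescue this.

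The paper avoids this obstruction entirely. It differentiates \eqref{semi+b-n} in $k$ to obtain the kinetic relation \eqref{kin+b-n}, which is \emph{linear} in the unknown ${\rm sgn}_+(u_n-k)$, so only weak-$*$ convergence of $({\rm sgn}_\pm(u_n-k))$ and of the measures $(m^n_\pm)$ is needed to produce a kinetic solution $p_\pm$ in the sense of Definition \ref{def-kin}. The rigidity result \cite[Corollary 4.2]{pan_tams} then shows that the weak limit $p_+$ is necessarily of the form ${\rm sgn}_+(u-k)$ for some $u\in L^\infty$, which yields both the existence of a function satisfying Definition \ref{def2} and, a posteriori, the strong convergence you were trying to establish a priori. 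Your uniqueness argument (Theorem \ref{eq} combined with Otto's uniqueness for Definition \ref{def-panov}) coincides with the paper's and is fine. To repair your existence proof you would either have to prove a genuinely uniform BV estimate up to the boundary for the scheme \eqref{aver}--\eqref{aver1} — which is not available from Proposition \ref{basic_prop} alone — or fall back on the kinetic/rigidity route as the paper does.
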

\begin{proof}
As we have already noticed, the functions $p+$ and $p-$ constructed in Theorem \ref{thm-bvp} (see the comments after the proof of the theorem) satisfy conditions of \cite[Definition 3.1]{pan_tams}. Therefore, according to \cite[Corollary 4.2]{pan_tams}, the function $p_+$ has the form $p_+(t,\mx,k)={\rm sgn}_+(u(t,\mx)-k)$ and therefore, $p_-(t,\mx,k)={\rm sgn}_{-}(u(t,\mx)-k)$ for some $u\in L^\infty(\R^+\times \Omega)$. Now, it is a standard fare to conclude that $u$ satisfies conditions of Definition \ref{def2} (it is the same as the proof of \cite[Theorem 3.3]{pan_tams}). According to Theorem \ref{eq} and the results from \cite{Otto}, we conclude that $u$ is a unique solution to \eqref{cl1-hom}, \eqref{ic1}, \eqref{bc1} in the sense of Definition \ref{def2}.
\end{proof}



Corresponding numerical examples are given below. It is
one-dimensional scalar conservation law defined on $[0,0.5]\times
[-1,1]$ with the flux
$f(x,u)=H_\eps(x)(1-u)(u+1)+4H_\eps(-x)(1-u)(u+1)$, where $H_\eps$
is a standard regularization of the Heaviside function with $\eps = 10^{-4}$. In the first simulation boundary conditions are
$u|_{x=-1}=0$, $u|_{x=1}=1$ and the initial condition is
$u|_{t=0}=H_\eps(x)$. In the second simulation boundary conditions are
$u|_{x=-1}=1$, $u|_{x=1}=0$ and the initial condition is  $u|_{t=0}=H_\eps(-x)$.

\begin{figure}[h]
\begin{center}$
\begin{array}{cc}
\includegraphics[width=2.5in]{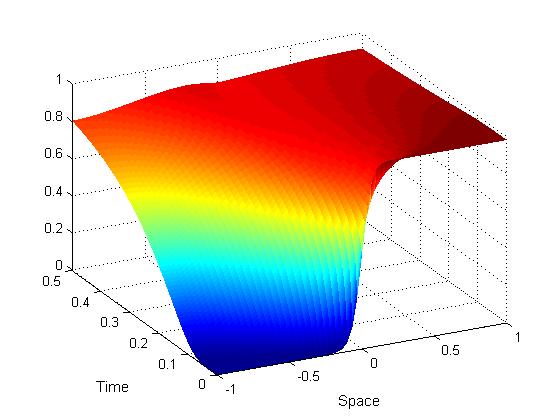} &
\includegraphics[width=2.5in]{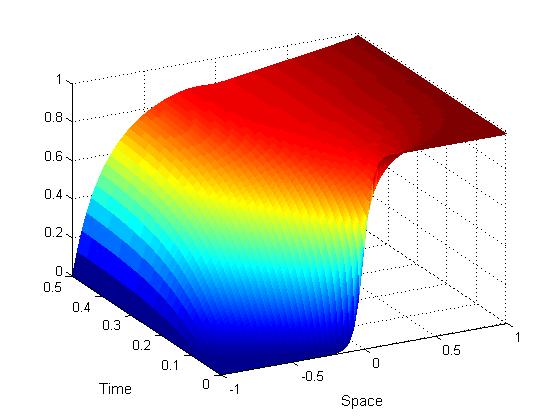}
\end{array}$
\end{center}
\caption{Cauchy problem (left) and boundary problem (right) with the initial condition $u_0(x) = H_\eps(x)$.}
\end{figure}
\begin{figure}[h]
\begin{center}$
\begin{array}{cc}
\includegraphics[width=2.5in]{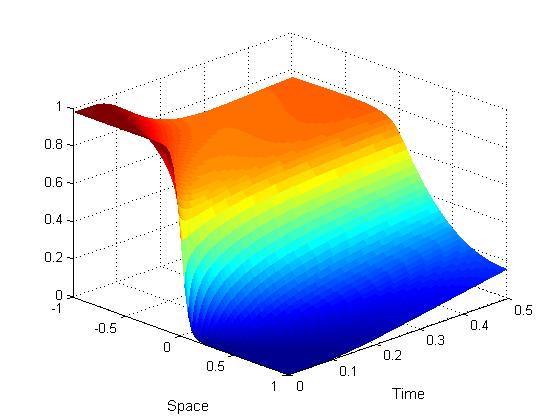} &
\includegraphics[width=2.5in]{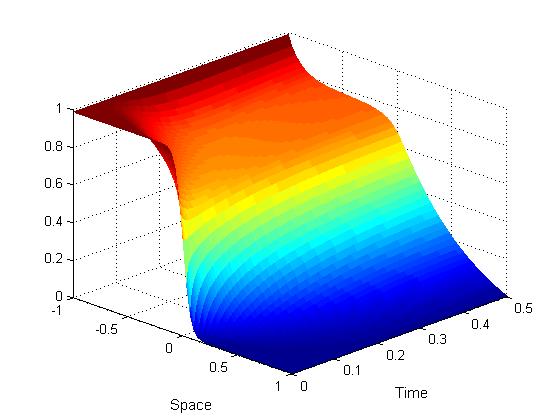}
\end{array}$
\end{center}
\caption{Cauchy problem (left) and boundary problem (right) with the initial condition $u_0(x) = H_\eps(-x)$.}
\end{figure}

\section{Acknowledgements}
The research is supported by the bilateral project \emph{Multiscale Methods and Calculus of Variations} between Croatia and Montenegro; by the Ministry of
Science of  Montenegro, project number 01-471; by the Croatian Science
Foundation, project number 9780 \emph{Weak convergence methods and applications (WeConMApp)}; and by the FP7 project \emph{Micro-local
defect functional and applications (MiLDeFA)} in the frame of the
program Marie Curie FP7-PEOPLE-2011-COFUND.

\end{document}